\title{Existence of Uncertainty Minimizers for the \\Continuous Wavelet Transform}
\author{Simon Halvdansson\thanks{Department of Mathematics, NTNU Norwegian University of Science and Technology}, Jan-Fredrik Olsen\thanks{Centre for Mathematical Sciences, Lund University}, Nir Sochen\thanks{School of Mathematical Sciences, Tel Aviv University}, and Ron Levie\thanks{Faculty of Mathematics, Technion – Israel Institute of Technology}}
\date{}
\theoremstyle{plain}
\newtheorem{theorem}{Theorem}[section]
\newtheorem*{theorem*}{Theorem}
\newtheorem{lemma}[theorem]{Lemma}
\newtheorem{proposition}[theorem]{Proposition}
\theoremstyle{definition}
\newtheorem{definition}[theorem]{Definition}
\theoremstyle{remark}
\newcommand{\C}{\mathbb{C}}
\newcommand{\R}{\mathbb{R}}
\newcommand{\N}{\mathbb{N}}
\newcommand{\vast}{\bBigg@{4}}
\newcommand{\Vast}{\bBigg@{5}}
\def\XXint#1#2#3{{\setbox0=\hbox{$#1{#2#3}{\int}$ }
		\vcenter{\hbox{$#2#3$ }}\kern-.6\wd0}}
\newcounter{RonCounter}
\newcounter{SimonCounter}
\newcounter{OlsenCounter}
\begin{document}
	
	\maketitle
	\vspace{-1cm}
	\begin{abstract}
	Continuous wavelet design is the endeavor to construct mother wavelets with desirable properties for the continuous wavelet transform (CWT). One class of methods for choosing a mother wavelet involves minimizing a functional, called the wavelet uncertainty functional.
	Recently, two new wavelet uncertainty functionals were derived from theoretical foundations. In both approaches, the uncertainty of a mother wavelet describes its concentration, or accuracy, as a time-scale probe. While an uncertainty minimizing mother wavelet can be proven to have desirable localization properties, the existence of such a minimizer was never studied.  In this paper, we prove the existence of minimizers for the two uncertainty functionals.
	\end{abstract}
	
	\textbf{Keywords.} Continuous wavelet, wavelet design, uncertainty principle, uncertainty minimizer
	
	\section{Introduction}
	For   $s, f \in H^2(\mathbb{R}) = \big\{ q \in L^2(\mathbb{R}) \,:\,\operatorname{supp}(\hat{q}) \subset \mathbb{R}^+ \big\}$, where $\R^+$ denotes the positive real numbers,  and $(\alpha, \beta) \in \mathbb{R}^2$,  we consider  the continuous wavelet transform (CWT) on the form
	\begin{align}
	\label{eq:Wfs}
		W_f[s](\alpha, \beta) &= \big\langle s, \pi(\alpha, \beta) f\big\rangle_{L^2(\mathbb{R})}.
	\end{align}	
	Here, $s$ is called the \textit{signal}, $f$   the \textit{mother wavelet}, and  $\pi(\alpha,\beta)$  is the wavelet representation 
	\begin{align}
	\label{eq:Wfs2}
		\pi(\alpha, \beta)f(t) &= e^{-\alpha/2}f\left(\frac{t-\beta}{e^\alpha}\right).
	\end{align}
	That is, the signal $s$ is analyzed by taking the inner product with dilations and translations of the mother wavelet. We can consider these operations separately, by writing  $\pi(\alpha,\beta)=\pi_2(\beta)\pi_1(\alpha)$, where $\pi_1(\alpha)f(t) = e^{-\alpha/2} f\left(\frac{t}{e^{\alpha}}\right)$ represents the dilations of $f$ and  $\pi_2(\beta) f(t) = f(t - \beta)$ translations.
	Defined in this way, the continuous wavelet transform is a bounded operator of the form
	$$W_f:H^2(\mathbb{R})\rightarrow L^2(\mathbb{R}^2;d\mu(\alpha,\beta)),$$
	 where
	 $d\mu(\alpha,\beta)=e^{-\alpha}\, d\alpha d\beta$ is the Haar measure of the affine group  \cite{grossman, daubechiesTen}.

	 The wavelet transform of functions in $L^2(\mathbb{R})$ can be treated by analyzing the positive and negative frequency supports separately. The mother wavelet $f$ is required to  be \emph{admissible}, namely, it must satisfy
	 \begin{equation}
	     \label{eq:admis}
	     \int_0^\infty \frac{|\hat{f}(\omega)|^2}{\omega} d\omega < \infty.
	 \end{equation} This guarantees that the wavelet transform is invertible \cite{mallat_tour}.
    
    In \eqref{eq:Wfs}, the mother wavelet $f$ is a free parameter, and the endeavor to construct mother wavelets with desirable properties is called \emph{wavelet design}.
	In this paper, we focus on wavelet design methods based on minimizing \emph{uncertainty functionals}.
	The uncertainty of a mother wavelet is typically interpreted as its \textit{sharpness}  as a time-scale probe. 
	In the short-time Fourier transform (STFT), the sharpness of a window function is defined as its Heisenberg uncertainty, and hence optimal windows are Gaussian functions \cite{grochenig_book}. 
	One classical attempt to generalize this approach to wavelet analysis is to generalize the Heisenberg uncertainty principle by taking infinitesimal group generators of $\pi$ as localization operators \cite{dahlke1995}. 
	While the group generator approach for defining localization is fruitful for the STFT, this is not the case for other transforms, like the CWT, as is explained in \cite{levie2013, maass2010}. 
	
	An alternative approach for defining a wavelet uncertainty, based on the concept of \emph{observables}, was proposed and investigated  in \cite{levie2013, levie2020, levie2021}.
	Observables are  localization operators that enable us to define uncertainty functionals  that measure the localization of mother wavelets $f$ in time and scale.
	The approach was shown to be meaningful in the sense that the uncertainty of a mother wavelet is directly linked to the sparsity, or sharpness, in the corresponding coefficient space.

	 Two observable-based uncertainty functionals were  proposed in \cite{levie2020} and \cite{levie2021}. However, the existence of  minimizers of these uncertainty functionals was not proved.  
	 In this paper, we prove the existence of a wavelet uncertainty minimizer in both cases.

	We note that alternative approaches to wavelet design include the method introduced by Daubechies to construct compactly supported wavelets with vanishing moments \cite{daubechiesTen}, and adaptive methods which aim to maximize the correlation between the mother wavelet and the signal  \cite{chapa}.
	
	\section{Wavelet uncertainty functionals}
	In this section, we recall the observables approach to wavelet uncertainty functionals, and the two wavelet uncertainties introduced in \cite{levie2013, levie2021}.
	
	\subsection{The wavelet transform in the frequency domain}
	
	Wavelet uncertainty functionals are represented more naturally in the frequency domain than in the time domain. Hence, in this paper, the default space in which we work is the frequency domain. Accordingly, we denote mother wavelets and signals in the signal space $L^2(\R^+) = \{f\in L^2(\R)\ |\ \operatorname{supp}(f) \subset \mathbb{R}^+  \}$ by $f$, without a hat notation. We denote signals in the time domain by $\check{f}$. The wavelet representation \eqref{eq:Wfs2}  is now given by 
	$$
	\pi(\alpha, \beta) f(\omega) = e^{-2\pi i \omega \beta} e^{\alpha/2} f(e^\alpha \omega).
	$$
	
	\subsection{Wavelet localization operators}
    The approach for defining uncertainty functionals is based on taking \emph{observables} as localization operators. Inspired by quantum mechanics, an observable is a symmetric operator \cite{HAPS,quantum_measure}. In the signal processing context, observable are interpreted as entities that measure some underlying physical quantities of signals \cite{levie2020}. For example, the multiplication operator
    \begin{equation}
    \label{eq:Tx}
       \check{T}_x \check{f}(t)=t\check{f}(t) 
    \end{equation} measures localisation in \emph{time} of  signals $\check{f}\in H^2(\R)$. That is, when treating $|\check{f}(t)|^2$ as the density of the signal $\check{f}$ at time $t$, the mean time of $\check{f}$ is defined as the center of mass
    $$
    \big\langle \check{T}_x \check{f}, \check{f} \big\rangle = \int_\R t|\check{f}(t)|^2\, dt.
    $$
    
    The following definition extends the above discussion for general observables, and  explains what is meant by the spread of a signal about an observed quantity. 
    
	\begin{definition}
	    Let $T$ be a symmetric operator on a Hilbert space $\mathcal{H}$  and $f$ a normalized vector in the domain  $\operatorname{Dom}(T)$ of $T$. The \emph{expected value} and \emph{variance} of $f$ with respect to $T$ are defined, respectively, as
	    \begin{align*}
    		e_f(T) &= \big\langle Tf, f \big\rangle,\\
    		v_f(T) &= \big\Vert (T - e_f(T)) f \big\Vert^2.
    	\end{align*}
	\end{definition}
	When we want to emphasise the space in which the inner product and norm are defined, we add a superscript to $e$ and $v$, e.g., $e_f^{\mathcal{H}}(T)$. 

	Since the wavelet transform represents signals in the time-scale space $L^2(\mathbb{R}^2; d\mu(\alpha,\beta))$, the wavelet observables are the \emph{time} and the \emph{scale} observables. The time observable (\ref{eq:Tx}) multiplies by the variable of the time space $H^2(\mathbb{R})$, in which $\pi_2(\beta)$ is represented as translation.	Similarly, there is a \emph{scale space} where $\pi_1(\alpha)$ is represented as a translation, and in which the scale observable is defined as a multiplication operator. 
	The mapping between the time space and the scale space is the \emph{scale transform} $U$, defined as
	\begin{align*}
	    U: L^2(\R^+)\rightarrow L^2(\mathbb{R}), \quad U\{f\}(\sigma) = e^{-\sigma/2} f(e^{-\sigma}) =: \tilde{f}(\sigma)
	\end{align*}
	
	The motivation behind the above construction is that, in order to measure the quantity which is transformed by $\pi_j$, $j=1,2$, we first represent $\pi_j$ as a translation operator, and then we treat the translated variable as the physical quantity corresponding to $\pi_j$. 
	
	Given an observable $T$, we denote its form in the time and scale spaces by $\check{T} = \mathcal{F} T \mathcal{F}^*$ and $\tilde{T} = U^* T U$, where $\mathcal{F}$ is the Fourier transform. To formally define the time and scale observables
	we denote by
	\begin{equation}
	    \label{eq:mult}
	    Y : f(y) \mapsto yf(y)
	\end{equation}
	 the general multiplication operator in $L^2(\R)$ with the domain $\operatorname{Dom}(Y) = \big\{ f \in L^2(\R) : y\mapsto yf(y) \in L^2(\R) \big\}$.

	\begin{definition}
	    The \emph{time} and \emph{scale observables} $T_x, T_\sigma$ are defined in the signal space $L^2(\R^+)$ as
	    \begin{align*}
	        T_x &= \mathcal{F} Y \mathcal{F}^*,\\
	        T_\sigma  &= U^*YU.
	    \end{align*}
	\end{definition}
	
	\begin{proposition}[\cite{levie2013}]\label{prop:frequency_observables}
    The signal space $L^2(\R^+)$ is invariant under the observables $T_x$ and $T_{\sigma}$. 
	    Moreover,
	    \begin{align*}
	        T_x f(\omega) &= i\frac{\partial}{\partial \omega} f(\omega),\\
	        T_\sigma f(\omega) &= -\ln(\omega) f(\omega).
	    \end{align*}
	     The domains of $T_x$ and $T_{\sigma}$ are the sets of $f \in L^2(\R^+)$ such that $f$ is absolutely continuous in $[a, b]$ for every $-\infty<a < b<\infty$, and  $\omega \mapsto -\ln(\omega) f(\omega) \in L^2(\R^+)$ respectively.
	\end{proposition}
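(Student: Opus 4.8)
The plan is to verify each assertion by direct computation, treating the scale observable $T_\sigma$ first because it reduces to a change of variables, and then the time observable $T_x$ via the standard Fourier intertwining of multiplication and differentiation. In both cases the explicit formulas will read off the invariance and the domains almost immediately.

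For $T_\sigma = U^*YU$, I would first confirm that $U$ is unitary from $L^2(\R^+)$ onto $L^2(\R)$. The substitution $\omega = e^{-\sigma}$ gives $\int_\R \big|e^{-\sigma/2}f(e^{-\sigma})\big|^2\, d\sigma = \int_0^\infty |f(\omega)|^2\,d\omega$, so $U$ is an isometry, and solving $g(\sigma) = e^{-\sigma/2}f(e^{-\sigma})$ for $f$ yields the inverse $U^*g(\omega) = \omega^{-1/2}g(-\ln\omega)$, showing $U$ is onto. Composing the three maps and using $e^{(\ln\omega)/2} = \omega^{1/2}$ and $e^{\ln\omega} = \omega$ collapses all factors of $\omega^{\pm 1/2}$, leaving $U^*YUf(\omega) = -\ln(\omega)f(\omega)$. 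Since this is a pointwise multiplication, $L^2(\R^+)$ is trivially invariant, and because $U$ is unitary the domain $\operatorname{Dom}(Y)$ pulls back to exactly the set of $f\in L^2(\R^+)$ with $\omega\mapsto -\ln(\omega)f(\omega)\in L^2(\R^+)$, as claimed.

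For $T_x = \mathcal{F}Y\mathcal{F}^*$, I would pass a frequency-domain $f$ to the time domain via $\mathcal{F}^*$, multiply by the variable, and return. With the Fourier normalization fixed earlier in the paper, differentiating the kernel under the integral sign yields $\mathcal{F}\big[t\,\check f(t)\big](\omega) = i\frac{d}{d\omega}\mathcal{F}[\check f](\omega)$, whence $T_x f(\omega) = i f'(\omega)$ directly. The domain is the image under the unitary $\mathcal{F}$ of $\operatorname{Dom}(Y)$, which by Plancherel consists of those $f\in L^2$ that are absolutely continuous on every compact interval with $f'\in L^2$; intersecting with the support constraint gives the stated domain. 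Invariance of $L^2(\R^+)$ then follows because if $f$ vanishes on $(-\infty,0)$, its absolutely continuous representative has $f'\equiv 0$ there, so $\operatorname{supp}(f')\subset\R^+$ as well.

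The step requiring the most care is the domain bookkeeping for the unbounded operator $T_x$: I must justify that conjugation by $\mathcal{F}$ carries $\operatorname{Dom}(Y)$ onto the Sobolev-type space of locally absolutely continuous $L^2$ functions with $L^2$ derivative, and — crucially — that this identification is compatible with the support condition defining $L^2(\R^+)$. Concretely, the delicate point is that differentiation in the frequency variable preserves support in $\R^+$, equivalently that multiplication by $t$ in the time domain preserves the Hardy-space condition; once this support-preservation is established, the self-adjointness of $Y$ transfers through the unitaries to both $T_x$ and $T_\sigma$ and closes the argument.
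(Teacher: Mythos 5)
Your argument is sound, and it is worth noting at the outset that the paper itself contains no proof of this proposition --- it is imported wholesale from \cite{levie2013} --- so there is no in-paper argument to compare against; what you have written is essentially the standard verification the citation points to. The change of variables $\omega = e^{-\sigma}$ showing $U$ is unitary with inverse $U^*g(\omega) = \omega^{-1/2}g(-\ln\omega)$, the cancellation of the $\omega^{\pm 1/2}$ factors yielding $T_\sigma f = -\ln(\omega)f$, the Fourier intertwining of multiplication and differentiation for $T_x$, the identification of the image of $\operatorname{Dom}(Y)$ under the unitary with the locally absolutely continuous $L^2$ functions having $L^2$ derivative, and the support-preservation argument (an absolutely continuous function vanishing on $(-\infty,0]$ has $f' = 0$ a.e.\ there) are all correct. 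Your last point is precisely the observation the paper records in the remark immediately following the proposition ($f(0)=0$, and $H^2(\R)$ invariant under $Y$), and your domain description for $T_x$ is in fact slightly more careful than the paper's wording, which leaves the necessary condition $f' \in L^2(\R^+)$ implicit.

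Two caveats. First, your closing sentence overreaches: self-adjointness of $Y$ does transfer to $T_\sigma$, since $U$ is unitary from $L^2(\R^+)$ \emph{onto} $L^2(\R)$, but it does not transfer to $T_x$. The conjugation $\mathcal{F}Y\mathcal{F}^*$ is self-adjoint on all of $L^2(\R)$, whereas the proposition concerns its restriction to the signal space $L^2(\R^+)$, where the domain carries the boundary condition $f(0)=0$; the resulting half-line operator $i\frac{d}{d\omega}$ is the classical example of a closed symmetric operator with deficiency indices $(1,0)$ (or $(0,1)$, depending on sign conventions) that admits no self-adjoint extension within $L^2(\R^+)$. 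This does not damage your proof --- the proposition asserts only the formulas, the invariance, and the domains, and the paper's notion of observable requires mere symmetry --- but the sentence should be weakened to ``symmetry transfers.'' Second, the identity $\mathcal{F}\big[t\,\check f(t)\big](\omega) = i\frac{d}{d\omega}\mathcal{F}[\check f](\omega)$ holds for the angular-frequency normalization $\hat f(\omega) = (2\pi)^{-1/2}\int \check f(t)e^{-i\omega t}\,dt$; under the convention the paper actually displays in its formula for $\pi(\alpha,\beta)$ (the factor $e^{-2\pi i \omega\beta}$) one would instead get $T_x = \frac{i}{2\pi}\frac{d}{d\omega}$. This $2\pi$ mismatch is internal to the paper rather than a flaw in your reasoning, but you should state explicitly which normalization you adopt instead of appealing to one ``fixed earlier in the paper.''
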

	Note that every $f$ in the domain of $T_x $ must satisfy $f(0)=0$. Indeed, $f$ is continuous and $f(\omega)=0$ for every $\omega<0$. Moreover, $L^2(\R^+)$ is invariant under $T_x$, so $H^2(\R)$ is invariant under $Y$. 
	
	The scale transform is canonical in the sense that it transforms dilations $\pi_1$ to translations.  It can therefore  be verified that the following \emph{canonical commutation relations} \cite{levie2020} hold
	\begin{align*}
	\pi_1(\alpha)^* T_\sigma \pi_1(\alpha) = T_\sigma + \alpha, \\
	\pi_2(\beta)^* T_x \pi_2(\beta) = T_x + \beta.
	\end{align*}
	Moreover, we have 
	\begin{align*}
	    e_{\pi(\alpha, 0) f}(T_\sigma) &= e_f(T_\sigma) + \alpha,\\
	    e_{\pi(0, \beta) f}(T_x) &= e_f(T_x) + \beta,\\
	    v_{\pi(\alpha, 0) f}(T_\sigma) &= v_f(T_\sigma),\\
	    v_{\pi(0, \beta) f}(T_x) &= v_f(T_x).
	\end{align*}
	Since admissible wavelets form a subset of $L^2(\R^+)$, we make the distinction between the \emph{signal space} $L^2(\R^+)$, which we now denote by $\mathcal{S}$, and the \emph{window space} $\mathcal{W}$ \cite{levie2017}. The inner product in the window space is defined according to the admissibility condition \eqref{eq:admis} as
	\begin{align*}
		\Big\langle f_1, f_2\Big\rangle_\mathcal{W} = \int_0^\infty f_1(\omega) \overline{f_2(\omega)}\frac{1}{\omega} d\omega.
	\end{align*}
	The window space $\mathcal{W}$ is defined to be the completion to a Hilbert space of $L^2(\R^+)$ with the inner product $\big\langle f_1, f_2\big\rangle_\mathcal{W}$. Concretely, $\mathcal{W}$ is the weighted Lebesgue space $L^2(\R^+, \frac{1}{\omega}d\omega)$. We call $e_f^\mathcal{S}(T_x)$ the \emph{signal-expected time},  and $e_f^\mathcal{W}(T_x)$ the \emph{window-expected time}, and similarly use the terms signal-expected scale, window-time variance, etc.    
	
	\subsection{Signal space uncertainty}
    The signal space uncertainty of $f\in L^2(\R^+)$, introduced in \cite{levie2013}, is based on a combination of the time and scale variances of $f$.
	\begin{definition}\label{def:signal_uncertainty}
	    The \emph{signal space uncertainty} $\mathcal{L}_{\rm S}$ is defined as
	    $$
    	\mathcal{L}_{\rm S}(f) = e^{-2e_{\frac{f}{\Vert f \Vert_\mathcal{S}}}^\mathcal{S}(T_\sigma)}v_{\frac{f}{\Vert f \Vert_\mathcal{S}}}^\mathcal{S}(T_x) + v_{\frac{f}{\Vert f \Vert_\mathcal{S}}}^\mathcal{S}(T_\sigma)
    	$$
    	on the domain $\operatorname{Dom}(\mathcal{L}_{\rm S}) = \operatorname{Dom}(T_x) \cap \operatorname{Dom}(T_\sigma)$.
	\end{definition}
	The exponential term in the definition guarantees that $\mathcal{L}_{\rm S}$ is invariant under $\pi(\alpha,\beta)$. This is a consequence of the following lemma.
	\begin{lemma}[\cite{levie2020}]\label{lemma:signal_invariance}
	    The signal space uncertainty $\mathcal{L}_{\rm S}$ is invariant under $\pi(\alpha,\beta)$ and linear scalings in the sense that
	    $$
	    \mathcal{L}_{\rm S}(f) = \mathcal{L}_{\rm S}(c\pi(\alpha, \beta) f)\qquad\text{ for all }c \in \C\text{ and } (\alpha, \beta)\in\R^2.
	    $$
	    In particular, for any $f\in L^2(\R^+)$, the normalized signal $f_{\text{N}} = \frac{1}{\Vert f \Vert_\mathcal{S}} \pi\big(e_{\frac{f}{\Vert f \Vert_\mathcal{S}}}^\mathcal{S}(T_x), e_{\frac{f}{\Vert f \Vert_\mathcal{S}}}^\mathcal{S}(T_\sigma)\big)^{-1} f$ satisfies $\Vert f_{\text{N}} \Vert_\mathcal{S} = 1$, $e_{f_{\text{N}}}^\mathcal{S}(T_x) = e_{f_{\text{N}}}^\mathcal{S}(T_\sigma) = 0$ and
	    \begin{align}\label{eq:simplified_signal_uncertainty}
	        \mathcal{L}_{\rm S}(f) = \mathcal{L}_{\rm S}(f_{\text{N}}) = \big\Vert T_x f_{\text{N}}\big\Vert_\mathcal{S}^2 + \big\Vert T_\sigma f_{\text{N}} \big\Vert_\mathcal{S}^2.
	    \end{align}
	\end{lemma}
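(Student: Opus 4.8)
The plan is to reduce to a normalized signal and then track how the observables $T_x$ and $T_\sigma$ transform under $\pi$. Since $\mathcal{L}_{\rm S}(f)$ depends on $f$ only through $f/\|f\|_\mathcal{S}$, and replacing $f$ by $cf$ alters this quotient only by the unimodular factor $c/|c|$, the invariance under scalings $c\in\C$ is immediate once one observes that both $e_g^\mathcal{S}(T)$ and $v_g^\mathcal{S}(T)$ are unchanged when $g$ is multiplied by a phase. It therefore suffices to take $\|f\|_\mathcal{S}=1$ and prove $\mathcal{L}_{\rm S}(\pi(\alpha,\beta)f)=\mathcal{L}_{\rm S}(f)$. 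A preliminary step is to note that $\pi(\alpha,\beta)$ is unitary on $\mathcal{S}=L^2(\R^+)$, by a one-line change of variables on its frequency-domain form, so that $\pi(\alpha,\beta)f$ stays normalized and the expected-value/variance formalism still applies.

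The core of the argument is to record the four conjugation relations for $T_x,T_\sigma$ under the building blocks $\pi_1(\alpha)$ and $\pi_2(\beta)$. Two are already available, namely $\pi_1(\alpha)^*T_\sigma\pi_1(\alpha)=T_\sigma+\alpha$ and $\pi_2(\beta)^*T_x\pi_2(\beta)=T_x+\beta$. I would derive the remaining two directly from the explicit forms $T_xf(\omega)=i\,\partial_\omega f(\omega)$ and $T_\sigma f(\omega)=-\ln(\omega)f(\omega)$ of Proposition~\ref{prop:frequency_observables}. Since $\pi_2(\beta)$ acts as multiplication by $e^{-2\pi i\omega\beta}$, it commutes with multiplication by $-\ln(\omega)$, giving $\pi_2(\beta)^*T_\sigma\pi_2(\beta)=T_\sigma$; and differentiating the dilated function yields the crucial multiplicative relation $\pi_1(\alpha)^*T_x\pi_1(\alpha)=e^\alpha T_x$. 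I expect this last identity to be the main (if routine) obstacle: the asymmetry that dilation shifts the scale observable additively but rescales the time observable by $e^\alpha$ is precisely what the prefactor $e^{-2e_f^\mathcal{S}(T_\sigma)}$ is built to cancel.

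With these in hand, I would combine them through the unitary identities $e_{\pi g}^\mathcal{S}(T)=e_g^\mathcal{S}(\pi^*T\pi)$ and $v_{\pi g}^\mathcal{S}(T)=v_g^\mathcal{S}(\pi^*T\pi)$, together with the scalar rules $e_g^\mathcal{S}(aT+b)=a\,e_g^\mathcal{S}(T)+b$ and $v_g^\mathcal{S}(aT+b)=a^2\,v_g^\mathcal{S}(T)$ for $a>0$. Writing $\pi=\pi(\alpha,\beta)=\pi_2(\beta)\pi_1(\alpha)$ and conjugating, the modulation fixes $T_\sigma$ while the dilation shifts it, so $\pi^*T_\sigma\pi=T_\sigma+\alpha$, whence $e_{\pi f}^\mathcal{S}(T_\sigma)=e_f^\mathcal{S}(T_\sigma)+\alpha$ and $v_{\pi f}^\mathcal{S}(T_\sigma)=v_f^\mathcal{S}(T_\sigma)$; similarly $\pi^*T_x\pi=e^\alpha T_x+\beta$, giving $v_{\pi f}^\mathcal{S}(T_x)=e^{2\alpha}v_f^\mathcal{S}(T_x)$. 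Substituting into the definition, the product $e^{-2(e_f^\mathcal{S}(T_\sigma)+\alpha)}\cdot e^{2\alpha}$ collapses to $e^{-2e_f^\mathcal{S}(T_\sigma)}$ and the second term is unchanged, so $\mathcal{L}_{\rm S}(\pi(\alpha,\beta)f)=\mathcal{L}_{\rm S}(f)$.

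For the final claim, I would apply the same transformation rules to the particular group element defining $f_{\text{N}}$ and check directly that its dilation component cancels the scale expectation and its translation component cancels the time expectation, so that $e_{f_{\text{N}}}^\mathcal{S}(T_x)=e_{f_{\text{N}}}^\mathcal{S}(T_\sigma)=0$; unitarity gives $\|f_{\text{N}}\|_\mathcal{S}=1$ at once. Once both expectations vanish, the variances reduce to $v_{f_{\text{N}}}^\mathcal{S}(T_x)=\|T_xf_{\text{N}}\|_\mathcal{S}^2$ and $v_{f_{\text{N}}}^\mathcal{S}(T_\sigma)=\|T_\sigma f_{\text{N}}\|_\mathcal{S}^2$, while the prefactor $e^{-2e_{f_{\text{N}}}^\mathcal{S}(T_\sigma)}$ equals $1$, yielding \eqref{eq:simplified_signal_uncertainty}; the identity $\mathcal{L}_{\rm S}(f)=\mathcal{L}_{\rm S}(f_{\text{N}})$ is then just a special case of the invariance already established.
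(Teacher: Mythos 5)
Your proposal is correct. Note first that this paper does not actually prove Lemma~\ref{lemma:signal_invariance} --- it is imported from \cite{levie2020} as a quoted result --- so there is no in-paper argument to compare against; what you have written is a sound, self-contained verification. Your route is the natural one, and it is consistent with the machinery the paper itself records after Proposition~\ref{prop:frequency_observables}: the paper lists $e_{\pi(\alpha,0)f}(T_\sigma)=e_f(T_\sigma)+\alpha$, $e_{\pi(0,\beta)f}(T_x)=e_f(T_x)+\beta$, $v_{\pi(\alpha,0)f}(T_\sigma)=v_f(T_\sigma)$ and $v_{\pi(0,\beta)f}(T_x)=v_f(T_x)$, but conspicuously \emph{not} the one identity that makes the invariance nontrivial, namely $v_{\pi(\alpha,0)f}(T_x)=e^{2\alpha}v_f(T_x)$. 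You correctly identify this as the crux, and your derivation of $\pi_1(\alpha)^*T_x\pi_1(\alpha)=e^\alpha T_x$ checks out: with $\pi_1(\alpha)f(\omega)=e^{\alpha/2}f(e^\alpha\omega)$ and $T_x=i\partial_\omega$, the chain rule produces exactly the factor $e^\alpha$, and combining with $\pi^*T_\sigma\pi=T_\sigma+\alpha$ the product $e^{-2(e_f(T_\sigma)+\alpha)}\cdot e^{2\alpha}$ collapses as you say. Your use of $e_{\pi g}(T)=e_g(\pi^*T\pi)$, $v_{\pi g}(T)=v_g(\pi^*T\pi)$ and the affine rules $e_g(aT+b)=ae_g(T)+b$, $v_g(aT+b)=a^2v_g(T)$ is also correct for unitary $\pi$ and normalized $g$.

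Two small caveats, neither a gap in substance. First, the statement's ``for all $c\in\C$'' must implicitly exclude $c=0$ (the functional normalizes by $\Vert f\Vert_\mathcal{S}$), which your phase-factor argument tacitly assumes. Second, when checking the normalization of $f_{\text{N}}$, be alert to the order of arguments: since $\pi(\alpha,\beta)$ has $\alpha$ as the dilation parameter, the cancellation works out only when the dilation slot receives the scale expectation $e^\mathcal{S}_{f/\Vert f\Vert}(T_\sigma)$ and the translation slot the time expectation --- which is how you in fact describe the verification (``dilation component cancels the scale expectation''), so your argument is right even though the displayed formula for $f_{\text{N}}$ in the lemma appears to list the arguments in the opposite order. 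It would also strengthen a fully rigorous write-up to note explicitly that $\pi(\alpha,\beta)$ preserves $\operatorname{Dom}(T_x)\cap\operatorname{Dom}(T_\sigma)$ (absolute continuity and the logarithmic weight condition are stable under modulation, translation of $\ln\omega$, and dilation), so that all expectations and variances in your computation are defined.
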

	The following proposition shows that  functions in the domain of $\mathcal{L}_{\rm S}$ are admissible.
	\begin{proposition}\label{prop:dom_S_admissible}
	    Any element of $\operatorname{Dom}(\mathcal{L}_{\rm S})$ is admissible, i.e., $\operatorname{Dom}(\mathcal{L}_{\rm S}) \subset \mathcal{W} \cap \mathcal{S}$.
	\end{proposition}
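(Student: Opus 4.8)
The plan is to show directly that $\|f\|_{\mathcal{W}}^2 = \int_0^\infty \frac{|f(\omega)|^2}{\omega}\,d\omega$ is finite for every $f \in \operatorname{Dom}(T_x) \cap \operatorname{Dom}(T_\sigma)$; membership in $\mathcal{S} = L^2(\R^+)$ is immediate, since $\operatorname{Dom}(T_x) \subset L^2(\R^+)$ by definition. I would split the integral at $\omega = 1$. On the tail $[1,\infty)$ the weight satisfies $\frac{1}{\omega} \le 1$, so $\int_1^\infty \frac{|f(\omega)|^2}{\omega}\,d\omega \le \int_1^\infty |f(\omega)|^2\,d\omega \le \|f\|_{\mathcal{S}}^2 < \infty$, and no further work is needed there.

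The only genuine issue is the singularity of the weight $\frac{1}{\omega}$ near the origin, and this is where I would use that $f \in \operatorname{Dom}(T_x)$. By Proposition \ref{prop:frequency_observables}, such an $f$ is absolutely continuous on every bounded interval with $f' = -iT_x f \in L^2(\R^+)$, and by the remark following it $f(0) = 0$. Writing $f(\omega) = \int_0^\omega f'(t)\,dt$ and applying Cauchy--Schwarz gives
$$
|f(\omega)|^2 \le \omega \int_0^\omega |f'(t)|^2\,dt \le \omega\,\|T_x f\|_{\mathcal{S}}^2,
$$
so that $\frac{|f(\omega)|^2}{\omega} \le \|T_x f\|_{\mathcal{S}}^2$ is uniformly bounded on $(0,1]$. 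Integrating yields $\int_0^1 \frac{|f(\omega)|^2}{\omega}\,d\omega \le \|T_x f\|_{\mathcal{S}}^2 < \infty$, and adding the tail bound gives $f \in \mathcal{W}$.

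I expect the argument to be short, and notably it uses only the membership $f \in \operatorname{Dom}(T_x)$, not $f \in \operatorname{Dom}(T_\sigma)$; the scale condition plays no role in admissibility. The one point requiring care is the boundary term in the fundamental theorem of calculus, i.e.\ justifying the representation $f(\omega) = \int_0^\omega f'(t)\,dt$, which rests on the absolute continuity of $f$ on $[0,\omega]$ together with the vanishing $f(0) = 0$ guaranteed by the remark after Proposition \ref{prop:frequency_observables}. Once that is in place, the Cauchy--Schwarz estimate near the origin is essentially the whole content of the proof.
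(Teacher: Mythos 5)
Your proof is correct and follows essentially the same route as the paper: split the admissibility integral at $\omega = 1$, bound the tail using $\frac{1}{\omega} \le 1$ and $f \in L^2(\R^+)$, and handle the singularity at the origin via $f(0) = 0$, absolute continuity, and Cauchy--Schwarz to get $|f(\omega)|^2 \le \omega \, \Vert T_x f \Vert_{\mathcal{S}}^2$. Your observation that only membership in $\operatorname{Dom}(T_x)$ is needed matches the paper's argument (which likewise never invokes $T_\sigma$), and your constants are in fact stated more carefully than in the paper's displayed estimate.
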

	\begin{proof}
	    Let $f \in \operatorname{Dom}(\mathcal{L}_{\rm S})$. Then $f$ is continuous and zero for negative $\omega$, so $f(0) = 0$. Moreover, since $f$ is in the domain of $T_x$, it holds that $f' \in L^2(\R^+)$. It then follows that
	    $$
	    \big|f(\omega)\big| \leq \int_0^\omega \big|f'(\xi)\big|\, d\xi \leq \sqrt{\big\Vert f' \big\Vert_{L^2}} \sqrt{\omega}
	    $$
	    by the Cauchy-Schwarz inequality. Hence
	    $$
	    \int_0^\infty \frac{|f(\omega)|^2}{\omega} d\omega \leq \int_0^1 \big\Vert f' \big\Vert_{L^2} \,d\omega + \big\Vert f \big\Vert_{L^2} < \infty.
	    $$
	\end{proof}
	
	\subsection{Phase space uncertainty}
	The \emph{phase space uncertainty}, introduced in \cite{levie2021}, is a way to model the spread of the 2D \emph{ambiguity function}
	$$
	K_f(\alpha, \beta) = W_f[f](\alpha, \beta) = \big\langle f, \pi(\alpha, \beta)f \big\rangle_{\mathcal{S}}.
	$$
	The ambiguity function determines the amount of `blurriness'  of the output of the wavelet transform in the coefficient space $L^2(\mathbb{R}^2;d\mu(\alpha,\beta))$. Indeed, $K_f$ is the reproducing kernel of $W_f[\mathcal{S}]$ \cite{fuhr05, levie2017}, meaning that
	\begin{align*}
		Q \in W_f[\mathcal{S}] \implies Q = Q * K_f.
	\end{align*}
	Hence, the spread of $K_f$ poses an upper bound on the resolution of the wavelet coefficient space.
	
	The phase space uncertainty is based on the variance of the \emph{phase space scale} and \textit{phase space time observables}. These are defined, respectively,    for $F : \R^2 \to \C$, by 
	\begin{align*}
	AF(\alpha, \beta) &= \alpha F(\alpha, \beta), \\
	BF(\alpha, \beta) &= \beta F(\alpha, \beta).
	\end{align*}
	These operators are self-adjoint on their domains 
	\begin{align*}
		\operatorname{Dom}(A) &= \big\{ F \in L^2(\R^2, d\mu(\alpha, \beta)) \ : \  (\alpha, \beta) \mapsto \alpha F(\alpha, \beta) \text{ is in }L^2(\R^2, d\mu(\alpha, \beta)) \big\}, \\
		\operatorname{Dom}(B) &= \big\{ F \in L^2(\R^2, d\mu(\alpha, \beta)) \ : \ (\alpha, \beta) \mapsto \beta F(\alpha, \beta) \text{ is in }L^2(\R^2, d\mu(\alpha, \beta)) \big\}.
	\end{align*}
	\begin{definition}\label{def:phase_uncertainty}
	    The \emph{phase space uncertainty} associated to the window $f$ is defined to be
	    $$
	    \mathcal{L}_{\rm P}(f) = v_{\frac{K_{f}}{\|K_f\|_{2}}}(A) + v_{\frac{K_{f}}{\|K_f\|_{2}}}(B),
	    $$
	    where $\frac{K_{f}}{\|K_f\|_{2}} = W_{\frac{f}{\Vert f \Vert_\mathcal{W}}}\big[\frac{f}{\Vert f \Vert_\mathcal{S}}\big]$ is the normalized ambiguity function, and the variance is taken in the space $L^2(\R^2, d\mu(\alpha, \beta))$. The domain $\operatorname{Dom}(\mathcal{L}_{\rm P})$ is the set of all $f \in L^2(\R^+)$ such that $K_{f} \in \operatorname{Dom}(A) \cap \operatorname{Dom}(B)$.
	\end{definition}
	A main result in \cite{levie2021} is a pull-back of the calculation of the phase space uncertainty to the window function, based on the wavelet-Plancherel theory \cite{levie2017}, which makes it considerably easier to work with.
    \begin{proposition}[\cite{levie2021}]\label{prop:phase_uncer_pullback}
        Let $\mathcal{D}_{\rm P}$ be the set of $f \in L^2(\R^+)$ such that  $f$ is absolutely continuous in every compact interval,   $\Vert f \Vert_\mathcal{S} = 1$, $e_f^\mathcal{S}(T_x) = e_f^\mathcal{S}(T_\sigma) = 0$,   and the functions 
        $$
        \omega \mapsto f'(\omega),\qquad \omega \mapsto \sqrt{\omega} f'(\omega),\qquad \omega \mapsto\frac{f(\omega)}{\omega},\qquad \omega \mapsto\ln(\omega)f(\omega)
        $$
        are square integrable. Then, $\mathcal{D}_{\rm P} \subset \operatorname{Dom}(\mathcal{L}_{\rm P})$ and for all $f \in \mathcal{D}_{\rm P}$,
        \begin{align}\label{eq:phase_def}
    		\mathcal{L}_{\rm P}(f) = \big\Vert T_x f \big\Vert_\mathcal{S}^2 + \big\Vert T_\sigma f\big\Vert_\mathcal{S}^2 + v_{\frac{f}{\Vert f \Vert_{\mathcal{W}}}}^{\mathcal{W}}\Big(i\omega \frac{\partial}{\partial \omega}\Big)\left\Vert \frac{f}{\omega} \right\Vert_\mathcal{S}^2 + v^{\mathcal{W}}_{\frac{f}{\Vert f \Vert_{\mathcal{W}}}}(-\ln(\omega)).
    	\end{align}
    \end{proposition}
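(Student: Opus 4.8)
The plan is to reduce every quantity living on the two-dimensional coefficient space to an integral over the window $f$, exploiting that the ambiguity function is a partial Fourier transform in the translation variable. Writing out the representation gives
\[
K_f(\alpha,\beta) = e^{\alpha/2}\int_0^\infty f(\omega)\,\overline{f(e^\alpha\omega)}\,e^{2\pi i\omega\beta}\,d\omega,
\]
so that for each fixed $\alpha$ the function $\beta\mapsto K_f(\alpha,\beta)$ is (up to the factor $e^{\alpha/2}$) the inverse Fourier transform of $h_\alpha(\omega):=f(\omega)\overline{f(e^\alpha\omega)}$. Applying Plancherel's theorem in $\beta$ and then the substitution $u=e^\alpha\omega$ (so that $\alpha=\ln u-\ln\omega$ and $d\alpha=du/u$) collapses the Haar weight $e^{-\alpha}$ and turns each phase-space integral into a product of one $\mathcal{S}$-integral and one $\mathcal{W}$-integral. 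As a first instance this yields the wavelet-Plancherel identity $\|K_f\|_2^2=\|f\|_\mathcal{S}^2\|f\|_\mathcal{W}^2=\|f\|_\mathcal{W}^2$, which simultaneously certifies $K_f\in L^2(\R^2,d\mu)$ and fixes the normalization $\|K_f\|_2=\|f\|_\mathcal{W}$.

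I would next compute the scale variance $v(A)$. Since $A$ does not involve $\beta$, Plancherel in $\beta$ reduces $\int\alpha^k|K_f|^2\,d\mu$ to $\int_0^\infty\int_0^\infty(\ln u-\ln\omega)^k|f(\omega)|^2|f(u)|^2\,\tfrac{du}{u}\,d\omega$ for $k=1,2$. For $k=1$ the centering hypothesis $e_f^\mathcal{S}(T_\sigma)=0$, i.e.\ $\int_0^\infty\ln\omega\,|f(\omega)|^2\,d\omega=0$, leaves $e(A)=e_{f/\|f\|_\mathcal{W}}^\mathcal{W}(\ln\omega)$, which is nonzero in general, so the $e(A)^2$ term must be retained. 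For $k=2$ the same hypothesis kills the cross term in the expansion of $(\ln u-\ln\omega)^2$, giving $e(A^2)=e_{f/\|f\|_\mathcal{W}}^\mathcal{W}\!\big((\ln\omega)^2\big)+\|T_\sigma f\|_\mathcal{S}^2$. Subtracting $e(A)^2$ converts the first summand into a genuine $\mathcal{W}$-variance, so that $v(A)=\|T_\sigma f\|_\mathcal{S}^2+v_{f/\|f\|_\mathcal{W}}^\mathcal{W}(-\ln\omega)$, recovering two of the four terms.

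For the time variance $v(B)$ I would use that, under Plancherel in $\beta$, multiplication by $\beta$ becomes differentiation in $\omega$ of $h_\alpha$, so that $\int\beta^2|K_f|^2\,d\mu$ (up to the normalizing constant fixed by the conventions that give $T_xf=if'$) equals $\int_\R\int_0^\infty|\partial_\omega h_\alpha(\omega)|^2\,d\omega\,d\alpha$. Expanding $\partial_\omega h_\alpha=f'(\omega)\overline{f(e^\alpha\omega)}+e^\alpha f(\omega)\overline{f'(e^\alpha\omega)}$ and integrating term by term through the substitution $u=e^\alpha\omega$, the two diagonal terms separate into $\|T_xf\|_\mathcal{S}^2$ and $\|f/\omega\|_\mathcal{S}^2\,\|i\omega\partial_\omega f\|_\mathcal{W}^2$. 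The centering hypothesis $e_f^\mathcal{S}(T_x)=0$, equivalent to $\int_0^\infty f'\overline{f}\,d\omega=0$, makes both the first moment $e(B)$ and the off-diagonal cross term vanish, and it simultaneously forces $e_{f/\|f\|_\mathcal{W}}^\mathcal{W}(i\omega\partial_\omega)=0$; the latter turns $\|i\omega\partial_\omega f\|_\mathcal{W}^2$ into $\|f\|_\mathcal{W}^2\,v_{f/\|f\|_\mathcal{W}}^\mathcal{W}(i\omega\partial_\omega)$. Dividing by $\|K_f\|_2^2=\|f\|_\mathcal{W}^2$ then gives $v(B)=\|T_xf\|_\mathcal{S}^2+v_{f/\|f\|_\mathcal{W}}^\mathcal{W}(i\omega\partial_\omega)\,\|f/\omega\|_\mathcal{S}^2$, and adding $v(A)+v(B)$ produces \eqref{eq:phase_def}.

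Finally, the inclusion $\mathcal{D}_{\rm P}\subset\operatorname{Dom}(\mathcal{L}_{\rm P})$ follows from the same computation, since the four integrability requirements defining $\mathcal{D}_{\rm P}$ — square integrability of $f'$, $\sqrt{\omega}f'$, $f/\omega$ and $\ln(\omega)f$ — are exactly the finiteness of the building blocks appearing above, whence $\alpha K_f,\beta K_f\in L^2(\R^2,d\mu)$. The main obstacle is analytic rather than algebraic: I must justify the use of Plancherel fibrewise in $\beta$ together with Fubini's theorem when interchanging the $\alpha$- and $\omega$-integrations, and I must verify that the boundary terms arising from the integration by parts (in particular $[|h_\alpha|^2]_0^\infty$ and $\int_0^\infty f'\overline{f}\,d\omega$ being purely imaginary) vanish. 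These points rely on the absolute continuity of $f$ on compact intervals, on $f(0)=0$ (which holds since $f\in\operatorname{Dom}(T_x)$, by the remark following Proposition \ref{prop:frequency_observables}), and on the decay guaranteed by the $L^2$ hypotheses; once absolute convergence of each double integral is established from these bounds, the algebraic identities above are routine.
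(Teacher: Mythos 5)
This proposition is imported from \cite{levie2021} and stated in the paper without proof, so there is no in-paper argument to compare against; judged on its own merits, your computation is essentially the standard wavelet--Plancherel pull-back argument and it is correct in outline. The fibrewise Plancherel in $\beta$, the substitution $u=e^{\alpha}\omega$ collapsing the Haar weight, the identity $\Vert K_f\Vert_2^2=\Vert f\Vert_\mathcal{S}^2\Vert f\Vert_\mathcal{W}^2$, the splitting of $\int\alpha^k|K_f|^2\,d\mu$ into signal- and window-moments of $\ln\omega$ (with the cross term killed by $e_f^\mathcal{S}(T_\sigma)=0$, while $e(A)=e^\mathcal{W}_{f/\Vert f\Vert_\mathcal{W}}(\ln\omega)$ survives and must be subtracted), and the expansion $\partial_\omega h_\alpha=f'(\omega)\overline{f(e^\alpha\omega)}+e^\alpha f(\omega)\overline{f'(e^\alpha\omega)}$ whose two diagonal terms give $\Vert T_xf\Vert_\mathcal{S}^2\Vert f\Vert_\mathcal{W}^2$ and $\Vert f/\omega\Vert_\mathcal{S}^2\Vert\omega f'\Vert_\mathcal{W}^2$, with the off-diagonal term and $e(B)$ annihilated because $e_f^\mathcal{S}(T_x)=0$ forces $\int_0^\infty f'\overline{f}\,d\omega=0$ --- all of this checks out and reproduces the four terms of \eqref{eq:phase_def}, including the observation (stated independently in Section 4 of this paper) that $e^{\mathcal{W}}_{f/\Vert f\Vert_\mathcal{W}}\big(i\omega\frac{\partial}{\partial\omega}\big)$ vanishes under the same hypothesis, which is what converts $\frac{1}{\Vert f\Vert_\mathcal{W}^2}\Vert i\omega f'\Vert_\mathcal{W}^2$ into a genuine $\mathcal{W}$-variance.

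Two caveats are worth recording. First, your hedge about the normalizing constant is necessary: the paper's frequency-domain representation carries $e^{-2\pi i\omega\beta}$ while Proposition \ref{prop:frequency_observables} states $T_xf=if'$, conventions that differ by a factor $2\pi$; your bookkeeping is right that multiplication by $\beta$ becomes (a constant times) $\partial_\omega h_\alpha$, but to land exactly on $\Vert T_xf\Vert_\mathcal{S}^2$ one must fix a single Fourier normalization throughout. Second, your final claim that the four integrability conditions are ``exactly'' the finiteness of the building blocks is slightly short: the $v(A)$ computation also requires $\Vert f\Vert_\mathcal{W}<\infty$ and finiteness of the window-moments $e_f^\mathcal{W}(\ln\omega)$ and $e_f^\mathcal{W}(\ln(\omega)^2)$, which do not appear verbatim among the four hypotheses. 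They do follow, but only via the pointwise bound $|f(\omega)|\leq\sqrt{\Vert f'\Vert_{L^2}}\sqrt{\omega}$ near the origin (the mechanism of Proposition \ref{prop:dom_S_admissible} and Lemma \ref{lemma:CS_f}) combined with square integrability of $\ln(\omega)f$ at infinity; spelling this out would close the one genuine gap between your sketch and a complete proof, alongside the Fubini and boundary-term justifications you have already correctly identified as the analytic content.
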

    Formula (\ref{eq:phase_def}) is similar to the signal space uncertainty \eqref{eq:simplified_signal_uncertainty}, with two added terms. The constraint $e_f^\mathcal{S}(T_x) = e_f^\mathcal{S}(T_\sigma) = 0$ in $\mathcal{D}_{\rm P}$ is taken for its signal processing utility. It assures that $f$ is centered at time and scale $0$, so that $W_f[s](\alpha,\beta)$ can be interpreted as the content of $s$ at the time-scale $(\alpha,\beta)$. The following proposition is analogous to Proposition \ref{prop:dom_S_admissible}.
    \begin{proposition}
	    Any element of $\mathcal{D}_{\rm P}$ is admissible, i.e., $\mathcal{D}_{\rm P} \subset \mathcal{W}\cap\mathcal{S}$.
	\end{proposition}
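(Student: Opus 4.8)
The plan is to note that membership in $\mathcal{S}$ is automatic and to reduce the claim to a single integrability estimate. Since $\mathcal{D}_{\rm P} \subset L^2(\R^+) = \mathcal{S}$ by definition, any $f \in \mathcal{D}_{\rm P}$ already lies in $\mathcal{S}$, so the entire content of the statement is to show that $f \in \mathcal{W} = L^2(\R^+, \frac{1}{\omega}\,d\omega)$, i.e. that $\int_0^\infty \frac{|f(\omega)|^2}{\omega}\,d\omega < \infty$.

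I would estimate this integral by splitting it at $\omega = 1$ and invoking a different defining property of $\mathcal{D}_{\rm P}$ on each piece. On $(0,1]$ one has $\frac{1}{\omega} \le \frac{1}{\omega^2}$, so $\int_0^1 \frac{|f(\omega)|^2}{\omega}\,d\omega \le \int_0^1 \frac{|f(\omega)|^2}{\omega^2}\,d\omega \le \left\Vert \frac{f}{\omega} \right\Vert_\mathcal{S}^2$, which is finite because $\omega \mapsto \frac{f(\omega)}{\omega}$ is required to be square integrable in the definition of $\mathcal{D}_{\rm P}$. On $[1,\infty)$ one has $\frac{1}{\omega} \le 1$, so $\int_1^\infty \frac{|f(\omega)|^2}{\omega}\,d\omega \le \int_1^\infty |f(\omega)|^2\,d\omega \le \Vert f \Vert_\mathcal{S}^2 < \infty$. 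Summing the two bounds yields $f \in \mathcal{W}$, and hence $\mathcal{D}_{\rm P} \subset \mathcal{W} \cap \mathcal{S}$.

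In closer analogy with the proof of Proposition \ref{prop:dom_S_admissible}, one could instead control the region near the origin without the $\frac{f}{\omega}$ hypothesis: since $f$ is absolutely continuous on compact intervals, vanishes for negative argument, and is continuous, we get $f(0) = 0$, and then $f' \in L^2(\R^+)$ combined with the Cauchy–Schwarz inequality gives the pointwise bound $|f(\omega)| \le \int_0^\omega |f'(\xi)|\,d\xi \le \sqrt{\omega}\,\Vert f' \Vert_{L^2}$; this makes $\frac{|f(\omega)|^2}{\omega}$ bounded on $(0,1]$, and the tail is handled exactly as above using $f \in L^2$. I do not anticipate a genuine obstacle here: the conditions bundled into $\mathcal{D}_{\rm P}$ are tailored so that admissibility follows at once after the integral is split at $\omega = 1$. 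The only points needing a moment of care are choosing the appropriate hypothesis for each region — the decay encoded in $\frac{f}{\omega} \in L^2$ (or the Sobolev-type bound $|f(\omega)| \le \sqrt{\omega}\,\Vert f'\Vert_{L^2}$) near the origin, and plain $L^2$-integrability near infinity — and, in the second route, justifying $f(0) = 0$ from continuity together with the support condition.
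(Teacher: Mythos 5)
Your proposal is correct, but your primary argument is genuinely different from the paper's. The paper disposes of this proposition in one line: it observes that $\mathcal{D}_{\rm P} \subset \operatorname{Dom}(\mathcal{L}_{\rm S})$ (absolute continuity with $f' \in L^2$ puts $f$ in $\operatorname{Dom}(T_x)$, and $\ln(\omega)f \in L^2$ puts $f$ in $\operatorname{Dom}(T_\sigma)$) and then cites Proposition \ref{prop:dom_S_admissible}, whose proof is exactly your second route: $f(0)=0$ plus Cauchy--Schwarz gives $|f(\omega)| \le \sqrt{\Vert f' \Vert_{L^2}}\sqrt{\omega}$, taming the weight near the origin. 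Your first route bypasses all of that by exploiting a hypothesis the signal-space domain does not have, namely $\frac{f}{\omega} \in L^2(\R^+)$: since $\frac{1}{\omega} \le \frac{1}{\omega^2}$ on $(0,1]$ and $\frac{1}{\omega} \le 1$ on $[1,\infty)$, you get immediately
\begin{equation*}
\Vert f \Vert_{\mathcal{W}}^2 \;\le\; \left\Vert \frac{f}{\omega} \right\Vert_{\mathcal{S}}^2 + \Vert f \Vert_{\mathcal{S}}^2 \;<\; \infty,
\end{equation*}
which is self-contained, needs no pointwise Sobolev-type bound, and even yields an explicit quantitative estimate on $\Vert f \Vert_{\mathcal{W}}$ in terms of quantities appearing in $\mathcal{L}_{\rm P}$. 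What the paper's route buys instead is structural economy: by factoring the proof through the inclusion $\mathcal{D}_{\rm P} \subset \operatorname{Dom}(\mathcal{L}_{\rm S})$, it establishes a relation between the two domains that is reused later (e.g., $\mathcal{K}_{\rm P} \subset \mathcal{K}_{\rm S}$ in Section \ref{sec:phase_uncer}), whereas your direct estimate treats the proposition in isolation. One small point of care in your second route, which you correctly flag: $f(0)=0$ requires absolute continuity on compact intervals containing the origin together with $\operatorname{supp}(f) \subset \R^+$; your first route avoids this issue entirely.
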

	\begin{proof}
	This follows by the fact that $\mathcal{D}_{\rm P} \subset \operatorname{Dom}(\mathcal{L}_{\rm S})$. 
	\end{proof}
	
	\section{Existence of signal space uncertainty minimizers}\label{sec:reg_uncer}
	In this section, we prove   our   main result on the existence of minimizers of the signal space uncertainty (Definition \ref{def:signal_uncertainty}). 
	\begin{theorem}\label{thm:exist_min_DS}
		There exists a minimizer of $\mathcal{L}_{\rm S}$ in $\operatorname{Dom}(\mathcal{L}_{\rm S})$.
	\end{theorem}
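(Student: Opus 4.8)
The plan is to run the direct method of the calculus of variations, after using the invariance in Lemma~\ref{lemma:signal_invariance} to replace the awkward functional $\mathcal{L}_{\rm S}$ by the quadratic energy $\|T_x g\|_\mathcal{S}^2 + \|T_\sigma g\|_\mathcal{S}^2 = \|g'\|_\mathcal{S}^2 + \|\ln(\omega)\,g\|_\mathcal{S}^2$ on the set of centered, normalized windows. Concretely, set $m = \inf_{\operatorname{Dom}(\mathcal{L}_{\rm S})}\mathcal{L}_{\rm S} \ge 0$, which is finite since, e.g., any smooth function compactly supported in $(0,\infty)$ lies in $\operatorname{Dom}(\mathcal{L}_{\rm S})$, and pick a minimizing sequence $(f_n)$. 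By Lemma~\ref{lemma:signal_invariance} I may replace each $f_n$ by its centered, normalized version $(f_n)_{\text{N}}$ without changing the value of $\mathcal{L}_{\rm S}$, and the commutation relations keep this in $\operatorname{Dom}(\mathcal{L}_{\rm S})$. I thus obtain a sequence, still called $(f_n)$, with $\|f_n\|_\mathcal{S}=1$, $e^\mathcal{S}_{f_n}(T_x)=e^\mathcal{S}_{f_n}(T_\sigma)=0$, and $\mathcal{L}_{\rm S}(f_n) = \|f_n'\|_\mathcal{S}^2 + \|\ln(\omega) f_n\|_\mathcal{S}^2 \to m$; in particular the sequences $(\|f_n'\|_\mathcal{S})$ and $(\|\ln(\omega) f_n\|_\mathcal{S})$ are bounded.

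The crux, and the step I expect to be the main obstacle, is to promote these bounds to strong $L^2$ compactness of $(f_n)$. This is exactly where the noncompactness of the affine symmetry group normally destroys compactness, and the point is that imposing the two centering constraints has pinned the sequence in place. The $H^1$-type bound from $\|f_n'\|_\mathcal{S}$ gives, for every $0<\epsilon<R<\infty$, a uniform bound of $(f_n)$ in $H^1([\epsilon,R])$, so by the Rellich–Kondrachov theorem a subsequence converges strongly in $L^2([\epsilon,R])$; a diagonal argument over $\epsilon\downarrow 0$, $R\uparrow\infty$ yields strong convergence in $L^2_{\mathrm{loc}}(0,\infty)$ to some limit $f$. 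The logarithmic weight then controls both tails uniformly: since $(\ln\omega)^2\ge(\ln R)^2$ for $\omega\ge R$ and $(\ln\omega)^2\ge(\ln(1/\epsilon))^2$ for $\omega\le\epsilon$, the bound on $\|\ln(\omega)f_n\|_\mathcal{S}$ gives $\int_{(0,\epsilon)\cup(R,\infty)}|f_n|^2\,d\omega\le C\big((\ln R)^{-2}+(\ln(1/\epsilon))^{-2}\big)$ uniformly in $n$. Combining the uniform tail smallness with local strong convergence upgrades this to strong convergence $f_n\to f$ in $L^2(\R^+)$; in particular $\|f\|_\mathcal{S}=1$, so $f\ne 0$, and $\supp(f)\subset\R^+$ persists in the limit.

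With strong $L^2$ convergence in hand, the remaining steps are soft. Passing to a further subsequence, $f_n'\rightharpoonup f'$ and $\ln(\omega)f_n\rightharpoonup \ln(\omega)f$ weakly in $L^2$, the weak limits being identified as the distributional derivative and the pointwise product by testing against $C_c^\infty(0,\infty)$; hence $f\in\operatorname{Dom}(T_x)\cap\operatorname{Dom}(T_\sigma)=\operatorname{Dom}(\mathcal{L}_{\rm S})$. Selecting a subsequence along which each of the two energy terms converges separately, weak lower semicontinuity of the norm gives $\|f'\|_\mathcal{S}^2+\|\ln(\omega)f\|_\mathcal{S}^2\le\liminf\big(\|f_n'\|_\mathcal{S}^2+\|\ln(\omega)f_n\|_\mathcal{S}^2\big)=m$. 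Moreover, pairing the strong convergence $f_n\to f$ against the weak convergence of $f_n'$ and of $\ln(\omega)f_n$ shows $e^\mathcal{S}_{f}(T_x)=\lim\langle i f_n',f_n\rangle_\mathcal{S}=0$ and $e^\mathcal{S}_{f}(T_\sigma)=\lim\langle -\ln(\omega)f_n,f_n\rangle_\mathcal{S}=0$, so $f$ is again centered and normalized. For such an $f$, Definition~\ref{def:signal_uncertainty} (equivalently the last identity in Lemma~\ref{lemma:signal_invariance}) gives $\mathcal{L}_{\rm S}(f)=\|f'\|_\mathcal{S}^2+\|\ln(\omega)f\|_\mathcal{S}^2\le m$, and since $m$ is the infimum I conclude $\mathcal{L}_{\rm S}(f)=m$, so $f$ is the desired minimizer.
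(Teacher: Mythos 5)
Your argument is sound, and it implements the same underlying compactness mechanism as the paper by a genuinely different route. Both proofs live off the two bounds $\Vert f_n' \Vert_\mathcal{S} \leq C$ and $\Vert \ln(\omega) f_n \Vert_\mathcal{S} \leq C$ along a centered, normalized minimizing sequence. The paper packages these into a fixed constraint set $\mathcal{K}_{\rm S}$ (Definition \ref{def:Ks}) and proves its compactness by hand: Chebyshev's inequality in the scale variable truncates to $[e^{-\alpha}, e^{\alpha}]$ (Lemma \ref{lemma:approx_K_by_compact}), a mirrored periodic extension has Fourier coefficients decaying like $1/|n|$, so the truncations lie in a Hilbert cube, and a diagonal Cauchy argument finishes pre-compactness (Proposition \ref{prop:K_precompact}); closedness of the individual constraints (Lemmas \ref{lemma:xf2leqM_closed}--\ref{lemma:T_closed}, and the sets $\mathcal{K}_{\alpha,\beta}$ in the proof of Theorem \ref{thm:exist_min_DS}) then substitutes for lower semicontinuity. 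You instead run the classical direct method: Rellich--Kondrachov on $[\epsilon, R]$ plus the logarithmic tail estimate (which is the paper's Chebyshev step in different clothing) upgrades to strong convergence in $L^2(\R^+)$, after which weak compactness, weak lower semicontinuity of the two quadratic terms, and the strong-against-weak pairing that passes the centering constraints $e_f(T_x) = e_f(T_\sigma) = 0$ to the limit complete the proof. Your route is shorter and more standard; the paper's is elementary and self-contained (no Sobolev compactness or weak convergence machinery), and its closedness lemmas are reused verbatim for the phase space functional in Section \ref{sec:phase_uncer}, where the analogue of your soft limit steps would have to be redone for the two additional, more awkward terms.

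One small point to patch: identifying the weak limit of $(f_n')$ by testing only against $C_c^\infty(0,\infty)$ shows $f' \in L^2$ on $(0,\infty)$, but membership in $\operatorname{Dom}(T_x)$ as given in Proposition \ref{prop:frequency_observables} requires absolute continuity on every compact interval of $\R$, which forces $f(0) = 0$; otherwise the zero extension of $f$ carries a point mass at the origin in its distributional derivative. This is immediate either by testing against $C_c^\infty(\R)$, since each $f_n$ is absolutely continuous on $\R$ and vanishes on $\R^-$, or from the uniform Cauchy--Schwarz bound $|f_n(\omega)| \leq \sqrt{C\omega}$ (as in Lemma \ref{lemma:CS_f}) passed to an a.e.\ pointwise limit along a subsequence. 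With that sentence added, your proof is complete.
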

	We first note that by Lemma \ref{lemma:signal_invariance}, we can restrict our attention to the set 
	\begin{align*}
		\mathcal{D}_{\rm S} &= \Big\{ f \in \operatorname{Dom}(\mathcal{L}_{\rm S}) : \Vert f \Vert_\mathcal{S} =1,\, e_f(T_x) = 0,\, e_f(T_\sigma) = 0 \Big\},
	\end{align*}
	where the uncertainty simplifies to \eqref{eq:simplified_signal_uncertainty}, i.e.,
	$$
	\mathcal{L}_{\rm S}(f) = \big\Vert T_x f \big\Vert_\mathcal{S}^2 +  \big\Vert T_\sigma f \big\Vert_\mathcal{S}^2
	$$
	The following proposition is the key to proving existence.
	\begin{proposition}\label{prop:min_in_comp}
		Let $(f_n)_{n} \subset \mathcal{D}_{\rm S}$ be a minimizing sequence of $\mathcal{L}_{\rm S}(f)$ in the sense that
		$$
		\lim_{n \to \infty} \mathcal{L}_{\rm S}(f_n) = \inf_{y \in \mathcal{D}_{\rm S}}\mathcal{L}_{\rm S}(y).
		$$
		Then, there exist $N \in \N$ and a compact subset $\mathcal{K}_{\rm S} \subset \mathcal{D}_{\rm S}$ such that $f_n \in \mathcal{K}_{\rm S}$ for $n > N$. 
	\end{proposition}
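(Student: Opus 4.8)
The plan is to exhibit $\mathcal{K}_{\rm S}$ as a sublevel set of $\mathcal{L}_{\rm S}$ and to prove its compactness in $L^2(\R^+)$ via the Kolmogorov--Riesz criterion, the point being that on $\mathcal{D}_{\rm S}$ the functional controls a weighted Sobolev norm. By Proposition \ref{prop:frequency_observables}, on $\mathcal{D}_{\rm S}$ we have $\mathcal{L}_{\rm S}(f) = \|f'\|_{\mathcal{S}}^2 + \|\ln(\omega) f\|_{\mathcal{S}}^2$. Writing $m = \inf_{\mathcal{D}_{\rm S}} \mathcal{L}_{\rm S}$, a minimizing sequence satisfies $\mathcal{L}_{\rm S}(f_n) \leq m + 1 =: C$ for all $n$ past some $N$, so I would set
$$\mathcal{K}_{\rm S} = \big\{ f \in \mathcal{D}_{\rm S} : \mathcal{L}_{\rm S}(f) \leq C \big\}.$$
This set contains the tail $(f_n)_{n > N}$ by construction and lies in $\mathcal{D}_{\rm S}$; what remains is to show it is compact in $L^2(\R^+)$.

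For precompactness I would check the three Kolmogorov--Riesz hypotheses uniformly over $\mathcal{K}_{\rm S}$. Boundedness is immediate from $\|f\|_{\mathcal{S}} = 1$. Equicontinuity follows from the uniform bound $\|f'\|_{\mathcal{S}}^2 \leq C$ through the estimate $\|\tau_h f - f\|_{\mathcal{S}} \leq |h|\, \|f'\|_{\mathcal{S}}$, valid because $f(0) = 0$ lets us extend $f$ by zero to an $H^1(\R)$ function with no boundary term in the derivative. The key observation is that the scale term controls the tails at both ends: since $|\ln(\omega)| \geq \ln R$ for $\omega > R > 1$ and $|\ln(\omega)| \geq |\ln \epsilon|$ for $0 < \omega < \epsilon < 1$, one gets
$$\int_R^\infty |f(\omega)|^2\, d\omega \leq \frac{C}{(\ln R)^2}, \qquad \int_0^\epsilon |f(\omega)|^2\, d\omega \leq \frac{C}{(\ln \epsilon)^2}$$
uniformly in $f \in \mathcal{K}_{\rm S}$. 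The first inequality supplies equitightness at infinity (tightness at $-\infty$ being automatic on $\R^+$), and the second rules out escape of mass to the origin. This yields precompactness in $L^2(\R^+)$.

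Finally I would show $\mathcal{K}_{\rm S}$ is closed, so that it is genuinely compact. If $g_k \to g$ in $L^2$ with $g_k \in \mathcal{K}_{\rm S}$, then strong convergence preserves $\|g\|_{\mathcal{S}} = 1$, and after passing to a subsequence the uniformly bounded families $(g_k')$ and $(\ln(\omega) g_k)$ converge weakly in $L^2$, necessarily to $g'$ and $\ln(\omega) g$; hence $g \in \operatorname{Dom}(\mathcal{L}_{\rm S})$ and, by weak lower semicontinuity of the norm, $\mathcal{L}_{\rm S}(g) \leq \liminf_k \mathcal{L}_{\rm S}(g_k) \leq C$. For the two centering constraints I would combine the strong convergence $g_k \to g$ with the weak convergence of $g_k'$ and $\ln(\omega) g_k$ to obtain $e_{g_k}(T_x) \to e_{g}(T_x)$ and $e_{g_k}(T_\sigma) \to e_{g}(T_\sigma)$ via a strong-times-weak pairing, so both expected values stay $0$ and $g \in \mathcal{D}_{\rm S}$. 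I expect this last step to be the main obstacle: because $T_x$ and $T_\sigma$ are unbounded, the defining constraints of $\mathcal{D}_{\rm S}$ are not continuous for $L^2$ convergence alone, and their persistence in the limit has to be extracted from the weak convergence of the auxiliary sequences together with the uniform energy bound $C$.
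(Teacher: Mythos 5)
Your proposal is correct, but it takes a genuinely different route from the paper's. Where you take $\mathcal{K}_{\rm S}$ to be a sublevel set of $\mathcal{L}_{\rm S}$ and get pre-compactness from the Kolmogorov--Riesz criterion, the paper defines $\mathcal{K}_{\rm S}$ by separate box constraints $\Vert T_x f \Vert_\mathcal{S}^2 \leq K$, $\Vert T_\sigma f \Vert_\mathcal{S}^2 \leq K$ and proves pre-compactness by hand: a Chebyshev bound on $|\tilde{f}(\sigma)|^2$ gives exactly your tail estimate $\int_R^\infty |f(\omega)|^2\, d\omega \leq C/(\ln R)^2$ (the two are identical under the scale transform with $\alpha = \ln R$), and the derivative bound is then exploited not through translation equicontinuity but by mirroring $f\big|_{[a,b]}$ to a periodic extension whose Fourier coefficients decay like $1/|n|$, placing it in a compact Hilbert cube, followed by a diagonal argument extracting a Cauchy subsequence. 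For closedness, the paper transfers the constraints to the time and scale space representations $\check{f}$, $\tilde{f}$ and invokes elementary closedness lemmas for sets of the form $\big\{q : \int y^2|q(y)|^2\,dy \leq M,\ \int y|q(y)|^2\,dy = 0\big\}$, whereas you use weak compactness of the bounded families $(g_k')$, $(\ln(\omega)g_k)$, weak lower semicontinuity of the norms, and strong-times-weak pairings for the centering constraints --- a standard direct-method argument that handles $e_g(T_x) = e_g(T_\sigma) = 0$ cleanly. The trade-off: your route leans on two black boxes (Kolmogorov--Riesz and weak sequential compactness) but is shorter and more modular; the paper's argument is self-contained and produces explicit compact approximants. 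Incidentally, your bound near the origin is not needed for pre-compactness, since Kolmogorov--Riesz only requires tightness outside large balls and the origin lies in a bounded region; it costs nothing but does no work there.

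One micro-detail to patch in your closedness step: membership of the limit $g$ in $\operatorname{Dom}(T_x)$ as defined in Proposition \ref{prop:frequency_observables} requires absolute continuity on every interval $[a,b]$, including those containing the origin, i.e.\ $g(0^+) = 0$. Your weak-limit identification of $g'$ only yields absolute continuity on compact subintervals of $(0,\infty)$. The missing piece follows from the uniform bound $|g_k(\omega)| \leq \sqrt{C}\sqrt{\omega}$ (Cauchy--Schwarz together with $g_k(0) = 0$, exactly as in Lemma \ref{lemma:CS_f}), which passes to the a.e.\ limit along a subsequence and forces $g(0^+) = 0$ by continuity of $g$ on $(0,\infty)$. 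With that sentence added, your argument is complete.
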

	
	In the above proposition, note that a minimizing sequence exists since $\mathcal{D}_{\rm S}$ is non-empty and $\mathcal{L}_{\rm S}(\mathcal{D}_{\rm S})$ consists of non-negative real numbers. In the following analysis, we fix a value  $K>0$ such that 
	\begin{equation}
	\label{eq:K}
	  K/2> \inf_{y \in \mathcal{D}_{\rm S}}\mathcal{L}_{\rm S}(y).
	\end{equation} 
	
	%
	
	\begin{definition}\label{def:Ks}
    	We define the subset  $\mathcal{K}_{\rm S}\subset \operatorname{Dom}(\mathcal{L}_{\rm S})$ to be 
    	\begin{equation}\label{eq:K_def}
    		\begin{aligned}
    			\mathcal{K}_{\rm S} = \bigg\{ f \in \operatorname{Dom}(\mathcal{L}_{\rm S}) :\, &\Vert f \Vert_\mathcal{S} = 1,\, e_f(T_x) = 0,\, e_f(T_\sigma) = 0,\,\\
    			&\hspace{30mm}\big\Vert T_x f \big\Vert_\mathcal{S}^2 \leq K,\, \big\Vert T_\sigma f \big\Vert_\mathcal{S}^2 \leq K \bigg\}
    		\end{aligned}
    	\end{equation}
	\end{definition}

	The following lemma is now easy to verify.
	\begin{lemma}\label{lemma:tail_S_in_K}
	    Let $(f_n)_n \subset \mathcal{D}_{\rm S}$ be a minimizing sequence of $\mathcal{L}_{\rm S}$. Then, there exists an $N \in \N$ such that $f_n \in \mathcal{K}_{\rm S}$ for $n \geq N$.
	\end{lemma}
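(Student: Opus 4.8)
The plan is to deduce the claim directly from the definitions; no compactness or analytic input is needed beyond the convergence of the minimizing sequence. First I would record that, by the definition of a minimizing sequence, $\mathcal{L}_{\rm S}(f_n) \to m$, where $m := \inf_{y \in \mathcal{D}_{\rm S}} \mathcal{L}_{\rm S}(y)$. The choice of $K$ in \eqref{eq:K} gives $m < K/2$, so taking $\varepsilon = K/2 - m > 0$ in the definition of the limit supplies an index $N$ with $\mathcal{L}_{\rm S}(f_n) < m + \varepsilon = K/2$ for all $n \geq N$.

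Next I would use that each $f_n$ lies in $\mathcal{D}_{\rm S}$, so the simplified expression $\mathcal{L}_{\rm S}(f_n) = \Vert T_x f_n \Vert_\mathcal{S}^2 + \Vert T_\sigma f_n \Vert_\mathcal{S}^2$ from \eqref{eq:simplified_signal_uncertainty} applies. Both summands are non-negative, so each is bounded above by the total, giving
$$
\Vert T_x f_n \Vert_\mathcal{S}^2 \leq \mathcal{L}_{\rm S}(f_n) < K/2 < K, \qquad \Vert T_\sigma f_n \Vert_\mathcal{S}^2 \leq \mathcal{L}_{\rm S}(f_n) < K/2 < K
$$
for every $n \geq N$. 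These are precisely the two additional inequalities appearing in the definition of $\mathcal{K}_{\rm S}$ (Definition \ref{def:Ks}).

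Finally I would observe that the remaining defining conditions of $\mathcal{K}_{\rm S}$, namely $f_n \in \operatorname{Dom}(\mathcal{L}_{\rm S})$, $\Vert f_n \Vert_\mathcal{S} = 1$, $e_{f_n}(T_x) = 0$ and $e_{f_n}(T_\sigma) = 0$, are inherited verbatim from membership in $\mathcal{D}_{\rm S}$. Combining these with the two norm bounds just established shows $f_n \in \mathcal{K}_{\rm S}$ for all $n \geq N$. I do not expect any genuine obstacle here: the statement is a bookkeeping consequence of how $K$ and $\mathcal{K}_{\rm S}$ were set up, and the only point to verify is that the constraints $\Vert T_x f \Vert_\mathcal{S}^2 \leq K$ and $\Vert T_\sigma f \Vert_\mathcal{S}^2 \leq K$ are slack enough to hold along the tail, which is exactly what the factor $1/2$ in \eqref{eq:K} secures.
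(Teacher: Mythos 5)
Your proposal is correct and follows essentially the same argument as the paper's own (very terse) proof: use \eqref{eq:K} to bound $\mathcal{L}_{\rm S}(f_n)$ by $K/2 < K$ along the tail, then use non-negativity of both terms of $\mathcal{L}_{\rm S}$ to bound each individually, with the remaining conditions of $\mathcal{K}_{\rm S}$ inherited from $\mathcal{D}_{\rm S}$. Your version is in fact a more careful write-up, spelling out details the paper leaves implicit (it even fixes the paper's typo ``$f_n < K$'' where $\mathcal{L}_{\rm S}(f_n) < K$ is meant).
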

	\begin{proof}
	 By (\ref{eq:K}), there exists an $N \in \N$ such that $f_n<K$ for $n \geq N$.
	    In particular, this means that $f_n \in \mathcal{K}_{\rm S}$ for large enough $n$ since both terms of $\mathcal{L}_{\rm S}$ are non-negative.
	\end{proof}

	We prove that $\mathcal{K}_{\rm S}$ is compact by showing that it is both closed and pre-compact. For the closedness, we begin by stating two auxiliary lemmas, the proofs of which we leave to the reader.
	\begin{lemma}\label{lemma:xf2leqM_closed}
		For any $M>0$, the set 
		$$
		\Big\{ q \in L^2(\R) : \int_\R y^2|q(y)|^2\, dy \leq M \Big\}
		$$
		is closed in $L^2(\R)$.
	\end{lemma}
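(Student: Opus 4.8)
The plan is to establish sequential closedness directly, using the fact that the weighted quantity $q \mapsto \int_\R y^2 |q(y)|^2\, dy$ is lower semicontinuous with respect to strong $L^2$ convergence. Concretely, I would take an arbitrary sequence $(q_n)_n$ contained in the set, so that $\int_\R y^2|q_n(y)|^2\, dy \leq M$ for every $n$, and suppose $q_n \to q$ in $L^2(\R)$. The goal is then to show that the limit $q$ satisfies the same bound, i.e., $\int_\R y^2|q(y)|^2\, dy \leq M$, which places $q$ in the set and proves closedness.

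The key step is to pass to pointwise information. Since $q_n \to q$ in $L^2(\R)$, there is a subsequence $(q_{n_k})_k$ with $q_{n_k}(y) \to q(y)$ for almost every $y \in \R$. Consequently the nonnegative integrands converge pointwise a.e., $y^2|q_{n_k}(y)|^2 \to y^2|q(y)|^2$, and Fatou's lemma applies to give
\[
\int_\R y^2 |q(y)|^2\, dy \;=\; \int_\R \liminf_{k\to\infty} y^2 |q_{n_k}(y)|^2\, dy \;\leq\; \liminf_{k\to\infty} \int_\R y^2 |q_{n_k}(y)|^2\, dy \;\leq\; M,
\]
where the final inequality uses that every $q_{n_k}$ lies in the set. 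This yields the desired bound for $q$ and completes the argument.

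There is essentially no serious obstacle here, which is why the statement is left to the reader; the only point deserving a word of care is that $L^2$ convergence does not itself supply almost-everywhere convergence, but only along a subsequence. This is harmless because the limit $q$ is fixed: whatever subsequence we extract still converges to the same $q$, so establishing the bound for $q$ through any a.e.-convergent subsequence is enough. One could equivalently phrase the whole proof as the observation that $q \mapsto \big(\int_\R y^2|q|^2\big)^{1/2}$ is a norm on the weighted space $L^2(\R, y^2\, dy)$ whose sublevel sets are closed precisely because the norm is $L^2$-lower semicontinuous, with Fatou's lemma again furnishing the semicontinuity.
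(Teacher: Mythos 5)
Your proof is correct, and there is nothing in the paper to compare it against line by line: the paper explicitly leaves this lemma (and its companion, Lemma \ref{lemma:xf2eq0_closed}) to the reader. Your Fatou argument is the standard one and is complete, including the one genuinely delicate point, which you handle correctly: $L^2$ convergence only gives a.e.\ convergence along a subsequence, but since the limit $q$ is fixed, establishing the bound $\int_\R y^2|q(y)|^2\,dy \leq M$ via any a.e.-convergent subsequence suffices. It is worth noting that where the paper \emph{does} prove closedness statements of this kind (Lemmas \ref{lemma:WS_triple_leqK} and \ref{lemma:WS_var_scale_leq_K}), it uses a different, more hands-on technique: truncate the weighted integral to a region such as $[1/N, N]$ where the weight is bounded, so that strong $L^2$ convergence directly controls the truncated integral (via a Cauchy--Schwarz expansion with $\delta_n = f - f_n$), deduce the bound for each $N$, and let $N \to \infty$. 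That truncation scheme is what generalizes to the paper's harder settings, where the bounded quantity involves products and cross terms and is not a single nonnegative pointwise limit to which Fatou applies; for the present lemma, your route is shorter and cleaner. One small caveat if you intend your method to cover Lemma \ref{lemma:xf2eq0_closed} as well: Fatou handles the inequality constraint there, but the equality constraint $\int_\R y|q(y)|^2\,dy = 0$ requires a two-sided limit argument (e.g., splitting into $|y| \leq R$ and $|y| > R$, using the bound $\int y^2|q_n|^2 \leq M$ to make the tail small uniformly in $n$), since Fatou only yields one-sided estimates.
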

	
	\begin{lemma}\label{lemma:xf2eq0_closed}
	    For any $M>0$, the set 
		$$
		\Big\{ q \in L^2(\R) : \int_\R y^2|q(y)|^2\, dy \leq M,\, \int_\R y|q(y)|^2\, dy = 0 \Big\}
		$$
		is closed in $L^2(\R)$.
	\end{lemma}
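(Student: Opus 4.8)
The plan is to realize the set in question as the intersection of the set $A = \big\{ q \in L^2(\R) : \int_\R y^2|q(y)|^2\, dy \leq M \big\}$, which is already closed by Lemma \ref{lemma:xf2leqM_closed}, with the zero level set of the functional $\Phi(q) = \int_\R y\,|q(y)|^2\, dy$, and then to show that $\Phi$ is continuous when restricted to $A$. Since the intersection of a closed set with the preimage of a point under a map that is continuous on that set is again closed, this will finish the proof.

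First I would note that $\Phi$ is well-defined and finite on $A$, even though it need not be defined on all of $L^2(\R)$: by Cauchy-Schwarz, $\int_\R |y|\,|q(y)|^2\, dy \leq \big(\int_\R y^2|q(y)|^2\, dy\big)^{1/2}\|q\|_{L^2} \leq \sqrt{M}\,\|q\|_{L^2}$. Next, given a sequence $(q_n)_n$ in the set with $q_n \to q$ in $L^2(\R)$, Lemma \ref{lemma:xf2leqM_closed} gives $q \in A$, so in particular $\int_\R y^2|q|^2\, dy \leq M$. It then remains to show $\Phi(q_n) \to \Phi(q)$; since $\Phi(q_n)=0$ for all $n$, this forces $\Phi(q)=0$ and hence $q$ lies in the set. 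To prove the convergence I would write
$$
\Phi(q_n) - \Phi(q) = \int_\R y\,(q_n - q)\,\overline{q_n}\, dy + \int_\R y\,q\,(\overline{q_n} - \overline{q})\, dy,
$$
and bound each summand by Cauchy-Schwarz: the first by $\big(\int_\R |q_n-q|^2\big)^{1/2}\big(\int_\R y^2|q_n|^2\big)^{1/2} \leq \sqrt{M}\,\|q_n - q\|_{L^2}$, and the second analogously by $\sqrt{M}\,\|q_n - q\|_{L^2}$, using the uniform bound $\int_\R y^2|q_n|^2 \leq M$ for the first and $\int_\R y^2|q|^2 \leq M$ for the second. Both tend to $0$, so $\Phi(q_n) \to \Phi(q)$.

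The only real obstacle is the observation that $\Phi$ is neither continuous nor even finite on all of $L^2(\R)$, so the continuity argument must be carried out relative to $A$, exploiting the common moment bound $\int_\R y^2|\cdot|^2 \leq M$ enjoyed both by the sequence and, thanks to Lemma \ref{lemma:xf2leqM_closed}, by its limit. Once this bound is available, the estimates above are entirely routine.
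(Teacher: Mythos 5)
Your proof is correct. Note that the paper itself offers no proof of this lemma (it is explicitly left to the reader, along with Lemma~\ref{lemma:xf2leqM_closed}), so there is nothing to diverge from; your argument is the natural completion. The one genuinely non-routine point --- that the first-moment functional $\Phi(q)=\int_\R y\,|q(y)|^2\,dy$ is neither finite nor continuous on all of $L^2(\R)$, so its continuity must be established \emph{relative} to the set with the uniform second-moment bound, which by Lemma~\ref{lemma:xf2leqM_closed} is inherited by the limit --- is exactly the crux, and you identify and handle it correctly: the decomposition $\Phi(q_n)-\Phi(q)=\int_\R y(q_n-q)\overline{q_n}\,dy+\int_\R y\,q\,(\overline{q_n}-\overline{q})\,dy$ together with Cauchy--Schwarz gives $|\Phi(q_n)-\Phi(q)|\leq 2\sqrt{M}\,\Vert q_n-q\Vert_{L^2}$, which is all that is needed.
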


	We are now ready to prove that  each of the conditions in \eqref{eq:K_def} defining $\mathcal{K}_{\rm S}$ corresponds to  a closed subset.  
	
	\begin{lemma}\label{lemma:T_closed}
		For any $M \geq 0$, the following subsets are closed in $L^2(\R^+)$:
		\begin{align*}
		    A= \Big\{ f \in \operatorname{Dom}(T_x) : \big\Vert T_x f \big\Vert_\mathcal{S}^2 \leq M,\, e_f(T_x) = 0 \Big\}, \\
			B= \Big\{ f \in \operatorname{Dom}(T_\sigma) :  \big\Vert T_\sigma f \big\Vert_\mathcal{S}^2 \leq M,\, e_f(T_\sigma) = 0 \Big\}.
		\end{align*}
	\end{lemma}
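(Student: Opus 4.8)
The plan is to exploit the unitary equivalences $T_x = \mathcal{F} Y \mathcal{F}^*$ and $T_\sigma = U^* Y U$, which conjugate both observables to the multiplication operator $Y$ on $L^2(\R)$, thereby reducing the closedness of $A$ and $B$ to Lemma \ref{lemma:xf2eq0_closed}. Concretely, for $f \in \operatorname{Dom}(T_\sigma)$ I set $g = Uf \in L^2(\R)$; since $U$ and $U^*$ are unitary,
\begin{align*}
\big\Vert T_\sigma f \big\Vert_\mathcal{S}^2 &= \big\Vert U^* Y g \big\Vert_\mathcal{S}^2 = \big\Vert Y g \big\Vert_{L^2(\R)}^2 = \int_\R \sigma^2 |g(\sigma)|^2\, d\sigma, \\
e_f(T_\sigma) &= \big\langle U^* Y g, f \big\rangle_\mathcal{S} = \big\langle Y g, g \big\rangle_{L^2(\R)} = \int_\R \sigma |g(\sigma)|^2\, d\sigma,
\end{align*}
where the second-moment bound guarantees, via Cauchy--Schwarz, that the first-moment integral converges absolutely. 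Hence $f \in B$ if and only if $g = Uf$ lies in
$$
C = \Big\{ q \in L^2(\R) : \int_\R \sigma^2 |q(\sigma)|^2\, d\sigma \leq M,\ \int_\R \sigma |q(\sigma)|^2\, d\sigma = 0 \Big\},
$$
so that $B = U^*(C)$. By Lemma \ref{lemma:xf2eq0_closed} the set $C$ is closed in $L^2(\R)$, and since $U^* : L^2(\R) \to L^2(\R^+)$ is unitary (hence a homeomorphism) it maps closed sets to closed sets; thus $B$ is closed.

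The argument for $A$ is identical after replacing $U$ by $\mathcal{F}$. Writing $h = \mathcal{F}^* f$, the identity $T_x = \mathcal{F} Y \mathcal{F}^*$ gives $\Vert T_x f\Vert_\mathcal{S}^2 = \int_\R y^2 |h(y)|^2\, dy$ and $e_f(T_x) = \int_\R y |h(y)|^2\, dy$, so that $f \in A$ if and only if $h$ lies in the same set $C$ (with $y$ in place of the dummy variable $\sigma$). The only difference is that $\mathcal{F}$ is unitary on all of $L^2(\R)$ rather than a map onto $L^2(\R^+)$, so it is cleanest to argue sequentially: if $f_n \in A$ and $f_n \to f$ in $L^2(\R^+)$, then $h_n = \mathcal{F}^* f_n \to \mathcal{F}^* f = h$ in $L^2(\R)$ by continuity of $\mathcal{F}^*$; each $h_n \in C$, and closedness of $C$ forces $h \in C$. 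Translating back through $\mathcal{F}$, the membership $h \in C$ encodes $f \in \operatorname{Dom}(T_x)$ together with $\Vert T_x f\Vert_\mathcal{S}^2 \leq M$ and $e_f(T_x) = 0$, whence $f \in A$. Equivalently, $A = L^2(\R^+) \cap \mathcal{F}(C)$ is an intersection of two closed subsets of $L^2(\R)$.

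Since the bulk of the work is delegated to Lemma \ref{lemma:xf2eq0_closed}, the main point to get right here is the bookkeeping of the unitary conjugations: verifying that the quadratic condition $\Vert T_\cdot f\Vert_\mathcal{S}^2 \leq M$ and the linear constraint $e_f(T_\cdot) = 0$ transform \emph{exactly} into the second- and first-moment conditions defining $C$, and that the domain requirement $f \in \operatorname{Dom}(T_\cdot)$ is automatically subsumed by finiteness of the second moment. I do not anticipate a genuine obstacle beyond this translation, because the delicate step---passing the constraint $e_f = 0$ to the limit using the uniform second-moment bound to control the first moment---is precisely what Lemma \ref{lemma:xf2eq0_closed} supplies.
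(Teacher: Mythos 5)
Your proof is correct and takes essentially the same route as the paper: both arguments conjugate $T_x$ and $T_\sigma$ by the unitaries $\mathcal{F}$ and $U$ to recast the constraints as the second- and first-moment conditions of Lemma \ref{lemma:xf2eq0_closed} in the time and scale domains, then pull closedness back through the (continuous) unitary. Your added care that $\mathcal{F}(C)$ need not lie in $L^2(\R^+)$, resolved via $A = L^2(\R^+) \cap \mathcal{F}(C)$, is a detail the paper leaves implicit but does not change the argument.
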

	\begin{proof}
    	By Proposition \ref{prop:frequency_observables}, $H^2(\R)$ is invariant under   multiplication by $t$ for $\check{f}$ in the domain of $\mathcal{L}_{\rm S}$. Hence, we can express the restrictions in the time and scale spaces by
    	\begin{align*}
    		A &= \Big\{ f \in \operatorname{Dom}(T_x) : \int t^2 |\check{f}(t)|^2 \,dt \leq M,\, \int t|\check{f}(t)|^2 \,dt = 0\Big\},\\\
    		B &= \Big\{ f \in \operatorname{Dom}(T_\sigma) : \int \sigma^2 |\tilde{f}(\sigma)|^2 \,d\sigma \leq M,\, \int \sigma |\tilde{f}(\sigma)|^2 \,d\sigma = 0\Big\}.
    	\end{align*}
    	By   Lemma \ref{lemma:xf2eq0_closed}, we obtain  that both $A$ and $B$ are closed. 
	\end{proof}
	\begin{proposition}\label{prop:K_closed}
 The set $\mathcal{K}_{\rm S}$ is closed in $L^2(\R^+)$.
	\end{proposition}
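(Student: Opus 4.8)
The plan is to write $\mathcal{K}_{\rm S}$ as a finite intersection of sets that are each closed in $L^2(\R^+)$, and then invoke the elementary fact that a finite intersection of closed sets is closed. The bulk of the work has already been carried out in Lemma \ref{lemma:T_closed}, so the remaining argument is essentially bookkeeping and no serious obstacle is expected.

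First I would take $M = K$ in Lemma \ref{lemma:T_closed} and recall the two sets $A = \{ f \in \operatorname{Dom}(T_x) : \Vert T_x f \Vert_\mathcal{S}^2 \leq K,\, e_f(T_x) = 0 \}$ and $B = \{ f \in \operatorname{Dom}(T_\sigma) : \Vert T_\sigma f \Vert_\mathcal{S}^2 \leq K,\, e_f(T_\sigma) = 0 \}$, which that lemma shows are closed in $L^2(\R^+)$. These encode exactly the time and scale constraints appearing in Definition \ref{def:Ks}. Next I would dispose of the normalization constraint by introducing the unit sphere $S = \{ f \in L^2(\R^+) : \Vert f \Vert_\mathcal{S} = 1 \}$; since the norm map $f \mapsto \Vert f \Vert_\mathcal{S}$ is continuous on $L^2(\R^+)$, the sphere $S$ is the preimage of the closed singleton $\{1\}$ and is therefore closed.

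Finally, because $A \subset \operatorname{Dom}(T_x)$ and $B \subset \operatorname{Dom}(T_\sigma)$, and $\operatorname{Dom}(\mathcal{L}_{\rm S}) = \operatorname{Dom}(T_x) \cap \operatorname{Dom}(T_\sigma)$, the intersection $A \cap B$ automatically lies in $\operatorname{Dom}(\mathcal{L}_{\rm S})$, so there is no separate domain condition to verify. Consequently $\mathcal{K}_{\rm S} = A \cap B \cap S$ is a finite intersection of sets closed in $L^2(\R^+)$, and hence $\mathcal{K}_{\rm S}$ is closed. The one point deserving attention — and the nearest thing to an obstacle — is that closedness must be understood in the $L^2(\R^+)$ norm topology rather than in any stronger graph-norm topology associated with $T_x$ or $T_\sigma$; but this is precisely the form of statement delivered by Lemma \ref{lemma:T_closed}, so it transfers to the intersection without any further work.
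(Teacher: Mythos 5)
Your proposal is correct and follows essentially the same route as the paper: the paper also proves the proposition by writing $\mathcal{K}_{\rm S}$ as an intersection of the sets corresponding to the conditions in \eqref{eq:K_def} and invoking Lemma \ref{lemma:T_closed}. Your only addition is to spell out explicitly that the unit sphere is closed (as the preimage of $\{1\}$ under the continuous norm map) and that $A \cap B \subset \operatorname{Dom}(\mathcal{L}_{\rm S})$, details the paper leaves implicit.
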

	\begin{proof}
		By writing $\mathcal{K}_{\rm S}$ as an intersection of sets corresponding to the conditions in  \eqref{eq:K_def}, and noting that   these sets are closed by Lemma \ref{lemma:T_closed}, it follows that $\mathcal{K}_{\rm S}$ is closed too. 
	\end{proof}
	To establish that $\mathcal{K}_{\rm S}$  is  pre-compact, we show that this set  can be approximated by compact sets with arbitrary small error. 
	
	\begin{lemma}\label{lemma:approx_K_by_compact}
		For any $\varepsilon > 0$, there exists a compact subset $C_{a,b}$ of $L^2(\R^+)$ such that for any $f \in \mathcal{K}_{\rm S}$, there is a $y\in C_{a,b}$ such that
		$$
		\Vert f - y \Vert_\mathcal{S} < \varepsilon.
		$$
	\end{lemma}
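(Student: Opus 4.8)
The plan is to exploit the two pointwise consequences of membership in $\mathcal{K}_{\rm S}$ recorded in Proposition \ref{prop:frequency_observables}. On the one hand, the bound $\Vert T_x f \Vert_\mathcal{S}^2 = \Vert f' \Vert_{L^2(\R^+)}^2 \leq K$ controls the derivative; on the other hand, $\Vert T_\sigma f \Vert_\mathcal{S}^2 = \int_0^\infty (\ln\omega)^2 |f(\omega)|^2\, d\omega \leq K$ forces the mass of $f$ away from the region where $|\ln\omega|$ is large, that is, away from both $\omega = 0$ and $\omega = \infty$. Accordingly, the compact set $C_{a,b}$ will be obtained by truncating every $f \in \mathcal{K}_{\rm S}$ to a fixed interval $[a,b]$ with $0 < a < 1 < b < \infty$: the scale bound will make this truncation uniformly $\varepsilon$-close, and the derivative bound will make the family of truncations relatively compact via a Sobolev (Rellich--Kondrachov) argument.

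First I would establish uniform tightness. For $0 < a < 1$ and $\omega \in (0,a)$ one has $(\ln\omega)^2 \geq (\ln a)^2$, and for $b > 1$ and $\omega > b$ one has $(\ln\omega)^2 \geq (\ln b)^2$. Hence, for every $f \in \mathcal{K}_{\rm S}$,
\begin{align*}
\int_{(0,a)\cup(b,\infty)} |f(\omega)|^2\, d\omega &\leq \frac{1}{(\ln a)^2}\int_0^a (\ln\omega)^2 |f(\omega)|^2\, d\omega + \frac{1}{(\ln b)^2}\int_b^\infty (\ln\omega)^2 |f(\omega)|^2\, d\omega \\
&\leq \frac{K}{(\ln a)^2} + \frac{K}{(\ln b)^2}.
\end{align*}
Choosing $a$ small enough and $b$ large enough that this last quantity is below $\varepsilon^2$ gives $\Vert f - f\,\mathbf{1}_{[a,b]} \Vert_\mathcal{S} < \varepsilon$ for every $f \in \mathcal{K}_{\rm S}$, uniformly.

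Next I would show that the truncated family $\{ f\,\mathbf{1}_{[a,b]} : f \in \mathcal{K}_{\rm S}\}$ is relatively compact in $L^2(\R^+)$. By Proposition \ref{prop:frequency_observables} each $f$ is absolutely continuous on $[a,b]$ with $\Vert f' \Vert_{L^2(a,b)} \leq \sqrt{K}$ and $\Vert f \Vert_{L^2(a,b)} \leq 1$, so the restrictions $f|_{[a,b]}$ lie in a bounded subset of the Sobolev space $H^1(a,b)$. Since $(a,b)$ is a bounded interval, the embedding $H^1(a,b) \hookrightarrow L^2(a,b)$ is compact, and hence $\{f|_{[a,b]}\}$ is relatively compact in $L^2(a,b)$. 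Extension by zero is an isometry $L^2(a,b) \to L^2(\R^+)$, so it maps this relatively compact family onto the relatively compact family $\{f\,\mathbf{1}_{[a,b]}\}$. Taking $C_{a,b}$ to be the closure of $\{f\,\mathbf{1}_{[a,b]} : f \in \mathcal{K}_{\rm S}\}$ in $L^2(\R^+)$ then yields a compact set, and for each $f$ the element $y = f\,\mathbf{1}_{[a,b]} \in C_{a,b}$ satisfies $\Vert f - y \Vert_\mathcal{S} < \varepsilon$ by the previous step.

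The only genuine work lies in the relative compactness of the truncated family, i.e.\ in invoking the compact Sobolev embedding; equivalently, one could verify the Fr\'echet--Kolmogorov criterion directly, the key estimate being $\Vert f(\cdot + h) - f \Vert_{L^2} \leq |h|\,\Vert f' \Vert_{L^2} \leq |h|\sqrt{K}$, which is equicontinuity in the mean uniform over $\mathcal{K}_{\rm S}$. I expect this to be the main obstacle. I would also be careful to keep $a > 0$ bounded away from the logarithmic singularity, so that $[a,b]$ is genuinely compact in $(0,\infty)$ and the standard Sobolev embedding applies.
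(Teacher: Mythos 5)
Your proof is correct, and its overall architecture --- truncate every $f \in \mathcal{K}_{\rm S}$ to a fixed interval $[a,b]$ using the scale bound, then show the truncated family is relatively compact using the derivative bound --- is the same as the paper's. Your tightness step is the paper's first step in disguise: the paper views $|\tilde f(\sigma)|^2$ as a mean-zero probability density with variance $\Vert T_\sigma f \Vert_\mathcal{S}^2 \le K$ and applies Chebyshev to get $\int_{[e^{-\alpha},e^{\alpha}]^c}|f(\omega)|^2\,d\omega \le K/\alpha^2$, which with $a=e^{-\alpha}$, $b=e^{\alpha}$ is exactly your direct annulus estimate $K/(\ln a)^2 + K/(\ln b)^2$ on the frequency axis. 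Where you genuinely diverge is the compactness step: rather than quoting Rellich--Kondrachov, the paper proves the needed compactness by hand, mirroring $f|_{[a,b]}$ about $\omega = b$, periodizing to an absolutely continuous $f_e$ with $\Vert f_e' \Vert^2 \le 2K$, deducing the Fourier coefficient decay $|c_n| \le \sqrt{2K}(b-a)/(\pi|n|)$, and invoking compactness of the resulting Hilbert cube $H_{a,b}$ via Parseval and Tychonoff; your $C_{a,b}$, the closure of the zero-extended truncations, sits inside the restriction of that cube. So your appeal to the compact embedding $H^1(a,b) \hookrightarrow L^2(a,b)$ buys brevity by citing a standard theorem of which the paper's Hilbert-cube construction is, in effect, a self-contained elementary proof; your version also avoids the mirroring trick, whose only purpose is to make the periodic extension continuous so that the coefficient decay holds. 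One further remark: your Fr\'echet--Kolmogorov alternative is legitimate as stated only because every $f \in \operatorname{Dom}(T_x)$ satisfies $f(0)=0$ (as the paper notes after Proposition \ref{prop:frequency_observables}), so the zero extension of $f$ lies in $H^1(\R)$ and the translation estimate $\Vert f(\cdot+h)-f\Vert_{L^2} \le |h|\sqrt{K}$ holds on the whole line; combined with your tightness bound, that route would in fact establish the pre-compactness of $\mathcal{K}_{\rm S}$ (Proposition \ref{prop:K_precompact}) in one stroke, bypassing this approximation lemma and the subsequent diagonal argument entirely.
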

	\begin{proof} 
		For $f \in \mathcal{K}_{\rm S}$, we consider  $|\tilde{f}(\sigma)|^2$ as a probability distribution with mean value $0$ and variance $\Vert T_\sigma f \Vert_\mathcal{S}^2 \leq K$. Applying Chebyshev's inequality to the associated random variable, we have that for any $\alpha > 0$,
		$$
		\int_{[e^{-\alpha}, e^{\alpha}]^c}|f(\omega)|^2 \,d\omega = \int_{[-\alpha, \alpha]^c} \big|\tilde{f}(\sigma)\big|^2 \,d\sigma  \leq \frac{K}{\alpha^2}.
		$$
		Now, fix $\varepsilon > 0$ and choose $\alpha$ so large that $\frac{K}{\alpha^2} < \varepsilon$. Then, by the above inequality with $a = e^{-\alpha}$ and $b = e^\alpha$, it holds that $\Vert f - f\big|_{[a, b]} \Vert < \varepsilon$. 
		
		We now show that $f\big|_{[a, b]}$ is contained in a compact subset for each $f \in \mathcal{K}_{\rm S}$.
		First, we note that for every $f \in \mathcal{K}_{\rm S}$, we have
		$$
		\int_a^b |f'(\omega)|^2 \,d\omega \leq \left\Vert f' \right\Vert_\mathcal{S}^2 = \big\Vert T_x f \big\Vert_\mathcal{S}^2 \leq K.
		$$
		Therefore, if we mirror $f\big|_{[a, b]}$ around $\omega = b$ to $[a, 2b-a]$ and let $f_e$ denote the absolutely continuous periodic extension of the resulting function, it will hold that
		\begin{align*}
			\big\Vert f_e'\big\Vert^2 \leq 2K &\implies \big\Vert f_e'\big\Vert^2 = \sum_n \frac{\pi}{(b-a)^2}n^2 |c_n|^2 \leq 2K\\
			&\implies |c_n| \leq \frac{\sqrt{2K}(b-a)}{\pi}\frac{1}{|n|},
		\end{align*}
		where $(c_n)_n$ are the Fourier coefficients of $f_e$. Next, we define 
		$$
		H_{a, b} = \Bigg\{\text{Periodic functions with period } 2(b-a)\text{ such that }|c_n| \leq \frac{\sqrt{2K}(b-a)}{\pi}\frac{1}{|n|} \Bigg\},
		$$
		and note that by Parseval's formula and Tychonoff's theorem, this set, known as a the Hilbert cube, is compact. Moreover, we obtain that  
		$$
		C_{a, b} = \Big\{q \in L^2(a, b) : \exists\, y \in H_{a, b} : q = y\big|_{[a, b]}\Big\}
		$$
		is compact. Indeed, let $(q_n)_n$ be a sequence in $C_{a, b}$. Then each $q_n$ can be mirrored and extended to yield a sequence in $({q_e}_n)_n \subset H_{a, b}$ which has a convergent subsequence. Restricting back to $(a,b)$, we obtain a convergent subsequence of $(q_n)_n$. 
		
		Finally, since   $f\big|_{[a, b]} \in C_{a, b}$, we obtain  the desired conclusion with $y = f\big|_{[a, b]}$. 

	\end{proof}
	
	\begin{proposition}\label{prop:K_precompact}
		The set $\mathcal{K}_{\rm S}$ is pre-compact. 
	\end{proposition}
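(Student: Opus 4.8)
The plan is to deduce pre-compactness from Lemma \ref{lemma:approx_K_by_compact} by invoking the standard characterization: in a complete metric space, a subset is pre-compact (i.e.\ has compact closure) if and only if it is totally bounded. Since $L^2(\R^+)$ is a Hilbert space, hence complete, it suffices to prove that $\mathcal{K}_{\rm S}$ is totally bounded, that is, that for every $\varepsilon > 0$ the set $\mathcal{K}_{\rm S}$ can be covered by finitely many balls of radius $\varepsilon$.

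First I would fix $\varepsilon > 0$ and apply Lemma \ref{lemma:approx_K_by_compact} with $\varepsilon/2$ in place of $\varepsilon$, producing a compact set $C_{a,b} \subset L^2(\R^+)$ such that every $f \in \mathcal{K}_{\rm S}$ satisfies $\Vert f - y \Vert_\mathcal{S} < \varepsilon/2$ for some $y \in C_{a,b}$. Next, since $C_{a,b}$ is compact it is in particular totally bounded, so I can choose finitely many centers $y_1, \dots, y_m \in C_{a,b}$ whose $\varepsilon/2$-balls cover $C_{a,b}$. A single application of the triangle inequality then shows that the $\varepsilon$-balls about $y_1, \dots, y_m$ cover $\mathcal{K}_{\rm S}$: given $f \in \mathcal{K}_{\rm S}$, pick $y \in C_{a,b}$ with $\Vert f - y \Vert_\mathcal{S} < \varepsilon/2$ and then $y_i$ with $\Vert y - y_i \Vert_\mathcal{S} < \varepsilon/2$, so $\Vert f - y_i \Vert_\mathcal{S} < \varepsilon$.

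This establishes that $\mathcal{K}_{\rm S}$ is totally bounded for every $\varepsilon > 0$, and completeness of $L^2(\R^+)$ upgrades total boundedness to pre-compactness, finishing the proof. I do not expect any genuine obstacle here: the substantive analytic content — the Chebyshev tail estimate from the scale variance bound together with the Hilbert-cube argument controlling the restriction $f|_{[a,b]}$ via the bound on $\Vert f' \Vert_\mathcal{S}$ — has already been carried out in Lemma \ref{lemma:approx_K_by_compact}. The present proposition is essentially the formal packaging of that lemma as an approximation-by-compact-sets criterion, so the only thing to be careful about is the bookkeeping of the two $\varepsilon/2$ contributions and the explicit appeal to completeness when passing from totally bounded to pre-compact.
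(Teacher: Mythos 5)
Your proof is correct, but it takes a genuinely different route from the paper. You package Lemma~\ref{lemma:approx_K_by_compact} through the standard metric-space equivalence: in a complete space, pre-compact $\iff$ totally bounded, and a set lying within $\varepsilon/2$ of a compact (hence totally bounded) set is itself totally bounded by the triangle inequality. The paper instead works at the level of sequences: given $(f_n)_n \subset \mathcal{K}_{\rm S}$, it applies the lemma with error $1/m$ for every $m$, extracts for each fixed $m$ a convergent subsequence inside the compact set $C_{a_m,b_m}$, nests these extractions, and then shows a diagonal subsequence $(f_{n_{j_m}^m})_m$ is Cauchy, invoking completeness of $L^2(\R^+)$ at the end. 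The substance is identical --- both proofs use Lemma~\ref{lemma:approx_K_by_compact} as the sole analytic input and completeness of $L^2(\R^+)$ as the sole structural input --- but your argument trades the paper's index bookkeeping (nested subsequences, the $2/m$ and $4/m'$ estimates) for a single appeal to the totally-bounded characterization, which makes it shorter and less error-prone; the paper's version is more self-contained in that it never needs the equivalence of total boundedness and pre-compactness, constructing the convergent subsequence by hand. One small point worth stating explicitly in your write-up: the elements of $C_{a,b}$ live in $L^2(a,b)$ in the lemma's construction, so you should note that they are regarded as elements of $L^2(\R^+)$ by extension by zero, exactly as the lemma's statement implicitly does; with that understood, your two $\varepsilon/2$ contributions combine exactly as you say.
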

	\begin{proof}
		Fix a sequence $(f_n)_n \subset \mathcal{K}_{\rm S}$. We  prove that $(f_n)_n$ has a convergent subsequence  by constructing a Cauchy subsequence. By Lemma \ref{lemma:approx_K_by_compact}, we can choose $a_m, b_m$ for each $m$ such that any function in $\mathcal{K}_{\rm S}$ can be approximated by a function in $C_{a_m, b_m}$ with error less than $\frac{1}{m}$. For each $f_n$, we let $f_n^m$ denote these approximations. That is, 
		\begin{align}\label{eq:approx_of_K_by_C}
			f_n^m \in C_{a_m, b_m},\qquad \big\Vert f_n - f_n^m \big\Vert_\mathcal{S} < \frac{1}{m}.
		\end{align}
		For fixed $m$, the sequence $(f_n^m)_n$ is in the compact set $C_{a_m, b_m}$. There is thus a convergent subsequence
		$$
		(f^m_{n_j^m})_j \subset C_{a_m, b_m},\qquad f^m_{n_j^m} \xrightarrow[j \to \infty]{} f^m\,\text{ in }C_{a_m, b_m}.
		$$
		We choose   subsequences so that $(n_j^m)_j$ is a subsequence of $(n_j^{m'})_j$ for every $m > m'$. 
		
		Keeping $m$ fixed, we have, by \eqref{eq:approx_of_K_by_C},  that
		$$
		\Vert f_{n_j^m} - f_{n_j^m}^m \Vert_\mathcal{S} < \frac{1}{m}.
		$$
		Now, since $f^m_{n_j^m} \xrightarrow[j \to \infty]{} f^m$ in $C_{a_m, b_m}$, we can choose $j_m$ so large that
		$$
		\big\Vert f^m_{n_{j}^m} - f^m \big\Vert_\mathcal{S} < \frac{1}{m} \implies \big\Vert f_{n_j^m} - f^m\big\Vert_\mathcal{S} < \frac{2}{m}
		$$
		for all $j \geq j_m$. This implies that $(f_{n_{j_m}^m})_m$ is a Cauchy sequence. Indeed,  for every $m > m'$ it holds that
		$$
		\big\Vert f_{n_{j_m}^m} - f^{m'} \big\Vert_\mathcal{S} < \frac{2}{m'} \implies \big\Vert f_{n_{j_{m_1}}^{m_1}} - f_{n_{j_{m_2}}^{m_2}} \big\Vert_\mathcal{S} < \frac{4}{m'}
		$$
		for every $m_1, m_2 > m'$. By the completeness of $L^2(\R^+)$, the proof is complete.
	\end{proof}
We can now prove Proposition \ref{prop:min_in_comp} and Theorem \ref{thm:exist_min_DS}.
	\begin{proof}[Proof of Proposition \ref{prop:min_in_comp}]
		The tail of any minimizing sequence is in $\mathcal{K}_{\rm S}$ by Lemma \ref{lemma:tail_S_in_K}. The compactness of $\mathcal{K}_{\rm S}$ follows from Proposition \ref{prop:K_closed} ($\mathcal{K}_{\rm S}$ closed)  and Proposition \ref{prop:K_precompact} ($\mathcal{K}_{\rm S}$ pre-compact). 
	\end{proof}
	
	\begin{proof}[Proof of Theorem \ref{thm:exist_min_DS}]\label{proof:thm_exist_min_DS}
		By Proposition \ref{prop:min_in_comp},  there is a minimizing sequence in the compact subset $\mathcal{K}_{\rm S} \subset \mathcal{D}_{\rm S}$. This sequence, therefore,  converges to some point $f_0 \in \mathcal{K}_{\rm S}$.  Moreover, by the compactness of $[0, K]$, we can pass to a subsequence such that 
		$$
		\big\Vert T_x f_n \big\Vert_\mathcal{S}^2 \xrightarrow[n \to \infty]{} \sigma_1,\qquad \big\Vert T_\sigma f_n \big\Vert_\mathcal{S}^2 \xrightarrow[n \to \infty]{} \sigma_2,
		$$
		for some $\sigma_1, \sigma_2 \leq K$ such that $\sigma_1 + \sigma_2 = \inf_{y \in \mathcal{D}_{\rm S}}\mathcal{L}_{\rm S}(y)$.
		
		Next we show that $f_0$ is a minimizer of $\mathcal{L}_{\rm S}$. Define for every $\alpha,\beta>0$
		$$
		\mathcal{K}_{\alpha, \beta} = \Big\{ q \in \mathcal{K}_{\rm S} : \big\Vert T_x q \big\Vert_\mathcal{S}^2 \leq \alpha,\, \big\Vert T_\sigma q \big\Vert_\mathcal{S}^2 \leq \beta \Big\}
		$$
		and note that this set  is closed by Lemma \ref{lemma:T_closed}. Since $\mathcal{K}_{\alpha, \beta} \subset \mathcal{K}_{\rm S}$, this implies  that $\mathcal{K}_{\alpha, \beta}$ is compact.  Now, for each $\varepsilon > 0$, it holds that the tail of the minimizing sequence $(f_n)_n$ is contained in $\mathcal{K}_{\sigma_1 + \varepsilon, \sigma_2 + \varepsilon}$. Thus, for any $\epsilon>0$, we have
		$
		\mathcal{L}_{\rm S}(f_0) \leq \sigma_1 + \sigma_2 + 2\varepsilon$, which implies that $\mathcal{L}_{\rm S}(f_0) = \sigma_1 + \sigma_2
		$.
		Thus,  $f_0$ is a minimizer.
	\end{proof}
	
	\section{Existence of phase space uncertainty minimizers}\label{sec:phase_uncer}
	In this section, we prove our main result on the   existence of minimizers of the   phase space uncertainty (Definition \ref{def:phase_uncertainty}). 
	For the convenience of the reader, we recall that from Proposition \ref{prop:phase_uncer_pullback}, for $f \in \mathcal{D}_{\rm P}$, the   phase space uncertainty \eqref{eq:phase_def} can be expressed as 
	\begin{align}\label{eq:phase_uncer_full}
		\mathcal{L}_{\rm P}(f) = \big\Vert T_x f\big\Vert_\mathcal{S}^2 + \big\Vert T_\sigma f\big\Vert_\mathcal{S}^2 + v_{\frac{f}{\Vert f \Vert_{\mathcal{W}}}}^{\mathcal{W}}\Big(i\omega \frac{\partial}{\partial \omega}\Big)\left\Vert \frac{f}{\omega} \right\Vert_\mathcal{S}^2 + v^{\mathcal{W}}_{\frac{f}{\Vert f \Vert_{\mathcal{W}}}}(-\ln(\omega)),
	\end{align}
	 where $\mathcal{D}_{\rm P}$ is the set of $f \in L^2(\R^+)$ such that  $f$ is absolutely continuous in every compact interval,   $\Vert f \Vert_\mathcal{S} = 1$, $e_f^\mathcal{S}(T_x) = e_f^\mathcal{S}(T_\sigma) = 0$,   and the functions 
        $$
        \omega \mapsto f'(\omega),\qquad \omega \mapsto \sqrt{\omega} f'(\omega),\qquad \omega \mapsto\frac{f(\omega)}{\omega},\qquad \omega \mapsto\ln(\omega)f(\omega)
        $$
        are square integrable.
	
	\begin{theorem}\label{thm:exist_min_Dl}
		There exists a minimizer of $\mathcal{L}_{\rm P}$ in $\mathcal{D}_{\rm P}$.
	\end{theorem}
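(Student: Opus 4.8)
The plan is to reduce to the compactness already established for the signal space. A minimizing sequence $(f_n)_n \subset \mathcal{D}_{\rm P}$ exists because $\mathcal{D}_{\rm P}$ is nonempty and $\mathcal{L}_{\rm P} \ge 0$. Since the four summands of \eqref{eq:phase_uncer_full} are nonnegative, $\mathcal{L}_{\rm P}(f) \ge \|T_x f\|_\mathcal{S}^2 + \|T_\sigma f\|_\mathcal{S}^2$ on $\mathcal{D}_{\rm P}$, and $\mathcal{D}_{\rm P}$ already imposes $\|f\|_\mathcal{S}=1$ and $e_f(T_x)=e_f(T_\sigma)=0$. Fixing $K$ with $K/2 > \inf_{\mathcal{D}_{\rm P}} \mathcal{L}_{\rm P}$, the tail of $(f_n)_n$ therefore lies in the compact set $\mathcal{K}_{\rm S}$ (Propositions \ref{prop:K_closed} and \ref{prop:K_precompact}), so I may pass to a subsequence with $f_n \to f_0 \in \mathcal{K}_{\rm S}$ in $\mathcal{S} = L^2(\R^+)$ and, refining once more, pointwise a.e. I would then record three consequences that come for free on $\mathcal{K}_{\rm S}$: the near-origin bound $|f_n(\omega)|^2 \le \|f_n'\|_\mathcal{S}^2\,\omega \le K\omega$ upgrades $\mathcal{S}$-convergence to $\mathcal{W}$-convergence, so $\|f_n\|_\mathcal{W} \to \|f_0\|_\mathcal{W} > 0$ and the normalized windows $g_n := f_n/\|f_n\|_\mathcal{W}$ converge to $g_0 := f_0/\|f_0\|_\mathcal{W}$ in $\mathcal{S}$ and $\mathcal{W}$; the bound $\|f_n'\|_\mathcal{S}^2 \le K$ makes $f_n'$ bounded in $L^2$, hence weakly convergent, necessarily to $f_0'$; and likewise $g_n' \rightharpoonup g_0'$.

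The heart of the argument is to control the two extra terms uniformly along this sequence. Hardy's inequality gives $\|f/\omega\|_\mathcal{S}^2 \le 4\|f'\|_\mathcal{S}^2 \le 4K$ for every $f \in \mathcal{K}_{\rm S}$, which bounds the factor $\|f/\omega\|_\mathcal{S}^2$ of the third term and shows $f_0/\omega \in L^2(\R^+)$. In the opposite direction, the centering $e_f(T_\sigma)=0$ and the bound $\|T_\sigma f\|_\mathcal{S}^2 \le K$ keep the scale density $|\tilde f|^2$ concentrated near $\sigma = 0$, so Chebyshev's inequality produces a uniform \emph{lower} bound $\|f/\omega\|_\mathcal{S}^2 \ge c_0 > 0$. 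Since the $\mathcal{W}$-expectation of $i\omega\partial_\omega$ is uniformly bounded on $\mathcal{K}_{\rm S}$ --- it is controlled by $|\langle f', f\rangle_\mathcal{S}| \le \sqrt{K}$ together with the lower bound on $\|f\|_\mathcal{W}$ --- a bound on the third term combined with $\|f/\omega\|_\mathcal{S}^2 \ge c_0$ converts into a uniform bound on $\|\sqrt{\omega}\,f_n'\|_\mathcal{S}^2$. Finally, splitting at $\omega = 1$ and reusing the near-origin bound gives $\int_0^\infty (\ln\omega)^2 |f_n|^2\,\frac{d\omega}{\omega} \le 3K$, a uniform bound on the $\mathcal{W}$-second moment of $\ln\omega$ governing the fourth term.

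With these bounds I would first check $f_0 \in \mathcal{D}_{\rm P}$: normalization, centering, absolute continuity, and $f_0', \ln(\omega) f_0 \in L^2$ follow from $f_0 \in \mathcal{K}_{\rm S}$; $f_0/\omega \in L^2$ is the Hardy bound above; and $\sqrt{\omega}\,f_0' \in L^2$ follows by weak lower semicontinuity, identifying the weak limit of the bounded sequence $\sqrt{\omega}\,g_n'$ against test functions in $C_c^\infty(\R^+)$ as $\sqrt{\omega}\,g_0'$. For minimality, I pass to a further subsequence along which each of the four summands of $\mathcal{L}_{\rm P}(f_n)$ converges, their limits summing to $\inf_{\mathcal{D}_{\rm P}}\mathcal{L}_{\rm P}$. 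The first two summands are lower semicontinuous exactly as in the proof of Theorem \ref{thm:exist_min_DS}. The fourth is lower semicontinuous by Fatou applied to its second moment together with convergence of its $\mathcal{W}$-mean, the latter following from the uniform integrability supplied by the $\int (\ln\omega)^2 |f_n|^2\,\frac{d\omega}{\omega}$ bound and from $\|f_n\|_\mathcal{W} \to \|f_0\|_\mathcal{W}$.

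The main obstacle is the third summand, which is a \emph{product} $v^\mathcal{W}_{g_n}(i\omega\partial_\omega)\,\|f_n/\omega\|_\mathcal{S}^2$ of two $f$-dependent factors, and the $\liminf$ of a product does not in general dominate the product of the factors' limits inferior. I would resolve this through nonnegativity: on the chosen subsequence the two factors are uniformly bounded (by the previous paragraph) and may be taken to converge, to $a$ and $b$ say. Each factor is separately lower semicontinuous --- the variance because $\|\sqrt{\omega}\,g_n'\|_\mathcal{S}^2$ is weakly lower semicontinuous while its mean converges via the weak--strong pairing of $g_n' \rightharpoonup g_0'$ with $g_n \to g_0$ in $\mathcal{S}$, and $\|f_n/\omega\|_\mathcal{S}^2$ by Fatou --- so $v^\mathcal{W}_{g_0}(i\omega\partial_\omega) \le a$ and $\|f_0/\omega\|_\mathcal{S}^2 \le b$. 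Since all four quantities are nonnegative, $v^\mathcal{W}_{g_0}(i\omega\partial_\omega)\,\|f_0/\omega\|_\mathcal{S}^2 \le ab$, which is the sought lower semicontinuity of the product. Summing the four term-by-term inequalities gives $\mathcal{L}_{\rm P}(f_0) \le \inf_{\mathcal{D}_{\rm P}}\mathcal{L}_{\rm P}$, and since $f_0 \in \mathcal{D}_{\rm P}$ this shows $f_0$ is a minimizer.
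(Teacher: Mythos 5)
Your proposal is correct, and while it shares the paper's skeleton --- trapping the tail of a minimizing sequence in the compact set $\mathcal{K}_{\rm S}$ of Section \ref{sec:reg_uncer}, extracting a strong $L^2(\R^+)$ limit $f_0$, and resting on the same two a priori ingredients, namely the uniform lower bound on $\left\Vert \frac{f}{\omega}\right\Vert_\mathcal{S}^2$ forced by $e_f(T_\sigma)=0$ (your Chebyshev argument is the case analysis of Lemma \ref{lemma:middle_K_estimate} in different clothing) and the near-origin bound $|f(\omega)|^2\le K\omega$ that upgrades $\mathcal{S}$-convergence to $\mathcal{W}$-convergence (Lemmas \ref{lemma:CS_f} and \ref{lemma:W_cont}) --- it resolves the two extra summands by a genuinely different mechanism. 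The paper never takes weak limits of derivatives: it augments $\mathcal{K}_{\rm S}$ to $\mathcal{K}_{\rm P}$ with sublevel constraints for the third and fourth summands, proves those sublevel sets closed by hands-on truncation estimates on $[1/N,N]$ and $[-N,N]$ with Cauchy--Schwarz control of the cross terms (Lemmas \ref{lemma:WS_triple_leqK} and \ref{lemma:WS_var_scale_leq_K}), and, crucially, treats the product term through the single inequality $A(f)B(f)\le M\Vert f\Vert_\mathcal{W}^2$, so the non-multiplicativity of $\liminf$ that you worry about never arises; compactness of $\mathcal{K}_{\rm P}$ as a closed subset of $\mathcal{K}_{\rm S}$ then feeds the same sublevel-set endgame as in Theorem \ref{thm:exist_min_DS}. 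You instead prove factor-wise sequential lower semicontinuity with softer tools: weak $L^2$ limits of $f_n'$ and $\sqrt{\omega}\,g_n'$ identified against $C_c^\infty(\R^+)$ test functions, Fatou for the multiplication-operator terms, Hardy's inequality for the upper bound $\left\Vert\frac{f}{\omega}\right\Vert_\mathcal{S}^2\le 4\Vert f'\Vert_\mathcal{S}^2\le 4K$ (which the paper never needs), and nonnegativity to recombine the factors; your uniform bound $\int_0^\infty \frac{(\ln\omega)^2}{\omega}|f_n(\omega)|^2\,d\omega\le 3K$, derived from $\mathcal{K}_{\rm S}$ alone, is likewise a nice shortcut to uniform integrability for the $\mathcal{W}$-mean. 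The trade-off: your route is shorter and replaces the paper's longest computations by standard weak-compactness facts, while the paper's route is more elementary in its toolbox and its closedness lemmas plug directly into the $\mathcal{K}_{\alpha,\beta}$-style final argument it reuses from Theorem \ref{thm:exist_min_DS}. Two minor simplifications are available to you: for nonnegative sequences $\liminf(u_nw_n)\ge(\liminf u_n)(\liminf w_n)$ holds outright, so the subsequence extraction for the product is not actually needed; and, as noted after Proposition \ref{prop:phase_min_in_comp}, the $\mathcal{W}$-mean of $i\omega\frac{\partial}{\partial\omega}$ vanishes identically on $\mathcal{D}_{\rm P}$ because $e_f^\mathcal{S}(T_x)=0$, so your weak--strong pairing for that mean and the bound $|\langle f_n',f_n\rangle_\mathcal{S}|\le\sqrt{K}$ are automatic.
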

	The proof follows a similar path to that of Theorem \ref{thm:exist_min_DS}. In particular,  the following proposition, which is analogous to Proposition \ref{prop:min_in_comp}, is a key step. 
	\begin{proposition}\label{prop:phase_min_in_comp}
		Let $(f_n)_{n} \subset \mathcal{D}_{\rm P}$ be a minimizing sequence of $\mathcal{L}_{\rm P}(f)$ in the sense that
		$$
		\lim_{n \to \infty} \mathcal{L}_{\rm P}(f_n) = \inf_{y \in \mathcal{D}_{\rm P}}\mathcal{L}_{\rm P}(y).
		$$
		Then, there exist $N \in \N$ and a compact subset $\mathcal{K}_{\rm P} \subset \mathcal{D}_{\rm P}$ such that $f_n \in \mathcal{K}_{\rm P}$ for $n > N$. 
	\end{proposition}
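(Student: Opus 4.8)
The plan is to follow the architecture of the proof of Proposition~\ref{prop:min_in_comp}: fix $K>0$ with $K/2>\inf_{y\in\mathcal{D}_{\rm P}}\mathcal{L}_{\rm P}(y)$, construct a set $\mathcal{K}_{\rm P}\subset\mathcal{D}_{\rm P}$ that captures the tail of the minimizing sequence, and show it is closed and pre-compact in $L^2(\R^+)$. Pre-compactness comes almost for free. Indeed, all four summands of \eqref{eq:phase_uncer_full} are non-negative, and on $\mathcal{D}_{\rm P}$ the first two coincide with the two terms of $\mathcal{L}_{\rm S}$; hence $\mathcal{L}_{\rm P}(f_n)\leq K$ forces $\Vert T_x f_n\Vert_\mathcal{S}^2\leq K$ and $\Vert T_\sigma f_n\Vert_\mathcal{S}^2\leq K$ for large $n$. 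If $\mathcal{K}_{\rm P}$ is defined to include these two bounds then $\mathcal{K}_{\rm P}\subset\mathcal{K}_{\rm S}$, and pre-compactness is inherited directly from Proposition~\ref{prop:K_precompact}. All the difficulty therefore lies in closedness, and specifically in the two extra terms of \eqref{eq:phase_uncer_full}, which are governed by the window space $\mathcal{W}$ rather than by $\mathcal{S}$.

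The first preparatory step is to convert the bound $\mathcal{L}_{\rm P}(f)\leq K$, together with the $\mathcal{S}$-constraints, into uniform two-sided control of the $\mathcal{W}$-quantities. By Hardy's inequality, $\Vert f/\omega\Vert_\mathcal{S}^2\leq 4\Vert f'\Vert_\mathcal{S}^2=4\Vert T_x f\Vert_\mathcal{S}^2\leq 4K$. Conversely, since $e_f^\mathcal{S}(T_\sigma)=0$ and $\Vert T_\sigma f\Vert_\mathcal{S}^2\leq K$, the measure $|f(\omega)|^2\,d\omega$ is a probability measure in which $\ln\omega$ has mean zero and variance at most $K$; Chebyshev's inequality confines its mass to a fixed annulus $e^{-\lambda}\leq\omega\leq e^\lambda$ and yields uniform lower bounds $\Vert f\Vert_\mathcal{W}^2\geq c_1>0$ and $\Vert f/\omega\Vert_\mathcal{S}^2\geq c_1'>0$, while the pointwise bound $|f(\omega)|^2\leq\omega\Vert f'\Vert_\mathcal{S}^2$ used in Proposition~\ref{prop:dom_S_admissible} gives $\Vert f\Vert_\mathcal{W}^2\leq K+1$.

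The observation that tames the third term is that its $\mathcal{W}$-expectation vanishes on $\mathcal{D}_{\rm P}$. A direct computation gives $\langle i\omega\,\partial_\omega f,f\rangle_\mathcal{W}=i\int f'\bar f\,d\omega=e_f^\mathcal{S}(T_x)=0$, so the variance collapses to a second moment, $v_{f/\Vert f\Vert_\mathcal{W}}^\mathcal{W}(i\omega\,\partial_\omega)=\Vert\sqrt{\omega}f'\Vert_\mathcal{S}^2/\Vert f\Vert_\mathcal{W}^2$, and the third term of \eqref{eq:phase_uncer_full} equals $\Vert\sqrt{\omega}f'\Vert_\mathcal{S}^2\,\Vert f/\omega\Vert_\mathcal{S}^2/\Vert f\Vert_\mathcal{W}^2$. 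Feeding in the two-sided bounds on $\Vert f\Vert_\mathcal{W}^2$ and $\Vert f/\omega\Vert_\mathcal{S}^2$, the bound ``third term $\leq K$'' becomes a genuine uniform bound $\Vert\sqrt{\omega}f'\Vert_\mathcal{S}^2\leq M$. This is precisely what allows the constraint $\sqrt{\omega}f'\in L^2$ to persist in the limit, which is the crucial point in keeping the minimizer inside $\mathcal{D}_{\rm P}$; a similar manipulation of the fourth term bounds the windowed scale integral $\int(\ln\omega)^2|f|^2\omega^{-1}\,d\omega$ uniformly.

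With these uniform bounds available I would define $\mathcal{K}_{\rm P}$ by the defining conditions of $\mathcal{K}_{\rm S}$ augmented by $\Vert f/\omega\Vert_\mathcal{S}^2\leq 4K$, $\Vert\sqrt{\omega}f'\Vert_\mathcal{S}^2\leq M$ and the windowed scale bound, and then prove closedness one condition at a time. The $\mathcal{S}$-conditions are closed by Lemma~\ref{lemma:T_closed}; the multiplication constraints $\Vert f/\omega\Vert_\mathcal{S}^2$ and $\Vert T_\sigma f\Vert_\mathcal{S}^2$ are lower semicontinuous under $L^2(\R^+)$-convergence by Fatou's lemma; and the derivative constraints pass to the limit through weak-$L^2$ compactness, since $\Vert f_n'\Vert_\mathcal{S}^2\leq K$ forces $f_n'\rightharpoonup f_0'$ and, tested against $C_c^\infty(\R^+)$, also $\sqrt{\omega}f_n'\rightharpoonup\sqrt{\omega}f_0'$. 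The step I expect to require the most care, and the genuine obstacle, is transferring the honestly $\mathcal{W}$-weighted quantities (the denominators $\Vert f\Vert_\mathcal{W}^2$ and the windowed variance of $-\ln\omega$) to the limit, because pre-compactness supplies only $L^2(\R^+)$-convergence, which says nothing about the weight $1/\omega$ near the origin. Here the uniform tail estimates from the second paragraph are decisive: near $\omega=0$ the bound $|f(\omega)|^2\leq\omega K$ makes $\int_0^a|f|^2\omega^{-1}\,d\omega$ and $\int_0^a(\ln\omega)^2|f|^2\omega^{-1}\,d\omega$ uniformly small, near $\omega=\infty$ the decaying weight does the same, and on compact subsets of $(0,\infty)$ the $L^2$-convergence transfers directly, so a Vitali-type argument yields convergence of these integrals and of their means. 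Combining the pieces shows that the $L^2(\R^+)$-limit of a sequence in $\mathcal{K}_{\rm P}$ again lies in $\mathcal{K}_{\rm P}\subset\mathcal{D}_{\rm P}$; being a closed subset of the pre-compact set $\mathcal{K}_{\rm S}$, the set $\mathcal{K}_{\rm P}$ is compact, and by construction it contains the tail of the minimizing sequence.
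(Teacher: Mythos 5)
Your proposal is correct, and its skeleton is the same as the paper's: capture the tail in a set $\mathcal{K}_{\rm P}$, inherit pre-compactness from $\mathcal{K}_{\rm S} \supset \mathcal{K}_{\rm P}$ via Proposition~\ref{prop:K_precompact}, and concentrate all the work in closedness of the two $\mathcal{W}$-governed conditions (the vanishing of $e^{\mathcal{W}}_{f/\Vert f \Vert_\mathcal{W}}\big(i\omega\frac{\partial}{\partial\omega}\big)$ on $\mathcal{D}_{\rm P}$ is also exactly the paper's simplification). But you run the closedness step along a genuinely different route. The paper keeps the constraints in their ratio/product form, $\frac{1}{\Vert f \Vert_{\mathcal{W}}^2}\Vert i\omega f'\Vert_{\mathcal{W}}^2\Vert f/\omega\Vert_\mathcal{S}^2 \leq K$ and $\frac{1}{\Vert f \Vert_{\mathcal{W}}^2} v_f^{\mathcal{W}}(-\ln(\omega)) \leq K$, and proves these sets closed by hand: truncating to $[1/N,N]$ (and to $[-N,N]$ on the Fourier side of $f^\flat$), expanding in $\delta_n = f - f_n$, controlling the cross terms with the auxiliary bounds of Lemmas~\ref{lemma:CS_f} and \ref{lemma:middle_K_estimate}, and passing $n \to \infty$ then $N \to \infty$ (Lemmas~\ref{lemma:WS_triple_leqK}, \ref{lemma:WS_var_scale_leq_K}). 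You instead first separate the constraints into individual uniform bounds --- Hardy's inequality for $\Vert f/\omega\Vert_\mathcal{S}^2 \leq 4K$ (not used in the paper), a Chebyshev annulus argument for the lower bounds on $\Vert f \Vert_\mathcal{W}^2$ and $\Vert f/\omega\Vert_\mathcal{S}^2$ (replacing the paper's two-case argument in Lemma~\ref{lemma:middle_K_estimate}, and using the variance bound where the paper uses only $e_f(T_\sigma)=0$), and the pointwise bound $|f(\omega)|^2 \leq K\omega$, which is the paper's Lemma~\ref{lemma:CS_f} --- and then close each separated condition with soft tools: Fatou for the multiplication-type constraints, weak-$L^2$ compactness plus testing against $C_c^\infty(\R^+)$ for $f'$ and $\sqrt{\omega}f'$, and a Vitali-type argument for the weighted integrals, which is essentially the paper's Lemma~\ref{lemma:W_cont} and its use in \eqref{eq:conv_window_expected_scale}. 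What each buys: your version avoids the paper's truncation-and-cross-term bookkeeping entirely and makes membership of the limit in $\mathcal{D}_{\rm P}$ transparent, since each required square-integrability condition survives individually; the paper's version proves closedness of the exact sublevel sets of the four uncertainty terms, which is what the subsequent proof of Theorem~\ref{thm:exist_min_Dl} plugs into directly via the $\mathcal{K}_{\alpha,\beta}$-type sets. With your separated set you would there still need lower semicontinuity of the ratio-form terms themselves; your tools do deliver this (numerators lower semicontinuous by Fatou and weak lower semicontinuity of the norm, denominators convergent by the Vitali step, and $\liminf$ of a product of non-negative lower semicontinuous quantities dominates the product of the $\liminf$s), but the proposal leaves it unsaid --- for the proposition as stated, however, this is immaterial, and your argument is complete.
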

	Note that when the signal-expected time $e_f^\mathcal{S}(T_x)$ is zero, so is $e_f^\mathcal{W}\big(i \omega \frac{\partial}{\partial \omega}\big)$ since
	$$
	e_{\frac{f}{\Vert f \Vert_{\mathcal{W}}}}^\mathcal{W}\Big(i \omega \frac{\partial}{\partial \omega}\Big) = \frac{1}{\Vert f \Vert_{\mathcal{W}}^2} e_f\Big(i\frac{\partial}{\partial \omega}\Big).
	$$
	We can therefore further simplify the uncertainty \eqref{eq:phase_uncer_full} for $f \in \mathcal{D}_{\rm P}$ to
	\begin{align*}\label{eq:phase_uncer_simplified}
		\mathcal{L}_{\rm P}(f) = \big\Vert T_x f\big\Vert_\mathcal{S}^2 + \big\Vert T_\sigma f \big\Vert_\mathcal{S}^2 + \frac{1}{\Vert f \Vert_{\mathcal{W}}^2} \big\Vert i\omega f' \big\Vert_{\mathcal{W}}^2\left\Vert \frac{f}{\omega} \right\Vert_\mathcal{S}^2 + \frac{1}{\Vert f \Vert_{\mathcal{W}}^2} v_f^\mathcal{W}(-\ln(\omega)).
	\end{align*}
	
	From here, we   proceed as in Section \ref{sec:reg_uncer}. First, we  define $\mathcal{K}_{\rm P}$ analogously to Definition \ref{def:Ks}. We fix a value  $K>0$ such that 
	\begin{equation}
	\label{eq:K2}
	  K/2> \inf_{y \in \mathcal{D}_{\rm S}}\mathcal{L}_{\rm P}(y).
	\end{equation} 
	\begin{definition}
    	 The domain $\mathcal{K}_{\rm P}\subset \mathcal{D}_{\rm P}$ is defined to be 
    	\begin{equation}\label{eq:K_large_intersection}
    		\begin{aligned}
    			\mathcal{K}_{\rm P} = \Bigg\{ f \in \operatorname{Dom}(\mathcal{L}_{\rm P}) :\, &\Vert f \Vert_\mathcal{S} = 1,\, e_f(T_x) = 0,\, e_f(T_\sigma) = 0,\\
    			&\big\Vert T_x f \big\Vert_\mathcal{S}^2 \leq K,\, \frac{1}{\Vert f \Vert_{\mathcal{W}}^2} \big\Vert i\omega f' \big\Vert_{\mathcal{W}}^2\left\Vert \frac{f}{\omega} \right\Vert_\mathcal{S}^2 \leq K,\\[2mm]
    			&\hspace{30mm}\big\Vert T_\sigma f \big\Vert_\mathcal{S}^2 \leq K,\, \frac{1}{\Vert f \Vert_{\mathcal{W}}^2} v_{f}^\mathcal{W}(-\ln(\omega)) \leq K \Bigg\}.
    		\end{aligned}
    	\end{equation}
	\end{definition}
	The following lemma follows by the same argument as for Lemma \ref{lemma:tail_S_in_K}. 
	\begin{lemma}\label{lemma:phase_tail_compact}
	    Let $(f_n)_n \subset \mathcal{D}_{\rm P}$ be a minimizing sequence of $\mathcal{L}_{\rm P}$. Then there exists an $N \in \N$ such that $f_n \in \mathcal{K}_{\rm P}$ for $n \geq N$.
	\end{lemma}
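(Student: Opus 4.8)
The plan is to mirror the proof of Lemma \ref{lemma:tail_S_in_K} essentially verbatim, exploiting the fact that the four bound conditions in the definition \eqref{eq:K_large_intersection} of $\mathcal{K}_{\rm P}$ are precisely the four non-negative summands of $\mathcal{L}_{\rm P}$. For $f \in \mathcal{D}_{\rm P}$ the centering constraints $e_f^\mathcal{S}(T_x) = e_f^\mathcal{S}(T_\sigma) = 0$ let us pass from \eqref{eq:phase_uncer_full} to the simplified four-term form of $\mathcal{L}_{\rm P}$, in which the two variance terms become $\tfrac{1}{\|f\|_\mathcal{W}^2}\|i\omega f'\|_\mathcal{W}^2\|f/\omega\|_\mathcal{S}^2$ and $\tfrac{1}{\|f\|_\mathcal{W}^2}v_f^\mathcal{W}(-\ln(\omega))$. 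Thus $\mathcal{L}_{\rm P}(f)$ is a sum of four non-negative quantities, each of which is exactly one of the quantities capped by $K$ in $\mathcal{K}_{\rm P}$.

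First I would use that $(f_n)_n$ is a minimizing sequence, so $\mathcal{L}_{\rm P}(f_n) \to \inf_{y \in \mathcal{D}_{\rm P}} \mathcal{L}_{\rm P}(y)$. By the choice of $K$ in \eqref{eq:K2}, which places $K/2$ strictly above $\inf_{y \in \mathcal{D}_{\rm P}} \mathcal{L}_{\rm P}(y)$, there is an $N \in \N$ with $\mathcal{L}_{\rm P}(f_n) < K$ for all $n \geq N$. Since each of the four summands of $\mathcal{L}_{\rm P}(f_n)$ is non-negative, each is bounded above by the total, giving
\begin{align*}
\|T_x f_n\|_\mathcal{S}^2 &\leq K, & \|T_\sigma f_n\|_\mathcal{S}^2 &\leq K, \\
\tfrac{1}{\|f_n\|_\mathcal{W}^2}\|i\omega f_n'\|_\mathcal{W}^2\|f_n/\omega\|_\mathcal{S}^2 &\leq K, & \tfrac{1}{\|f_n\|_\mathcal{W}^2}v_{f_n}^\mathcal{W}(-\ln(\omega)) &\leq K,
\end{align*}
which are exactly the four bound conditions appearing in \eqref{eq:K_large_intersection}.

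The remaining defining conditions of $\mathcal{K}_{\rm P}$, namely $\|f_n\|_\mathcal{S} = 1$ and $e_{f_n}(T_x) = e_{f_n}(T_\sigma) = 0$, hold for every $n$ simply because $f_n \in \mathcal{D}_{\rm P}$. Combining these with the four bounds shows $f_n \in \mathcal{K}_{\rm P}$ for all $n \geq N$, which is the claim. I do not expect a genuine obstacle here: the substance of the argument has been front-loaded into the definition \eqref{eq:K_large_intersection}, where each summand of the uncertainty was peeled off into its own constraint, so that all that remains is the elementary observation that a sum of non-negative terms being less than $K$ forces each term to be at most $K$. The only point worth stating with care is that the simplified four-term expression for $\mathcal{L}_{\rm P}$ is legitimate on $\mathcal{D}_{\rm P}$ (this is where the centering constraints are used), after which the estimate is immediate.
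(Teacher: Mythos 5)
Your proposal is correct and matches the paper's argument, which simply invokes the same reasoning as Lemma \ref{lemma:tail_S_in_K}: the choice of $K$ in \eqref{eq:K2} forces $\mathcal{L}_{\rm P}(f_n) < K$ for large $n$, non-negativity of the four summands in the simplified expression then caps each by $K$, and the normalization and centering conditions hold automatically since $f_n \in \mathcal{D}_{\rm P}$. You even correctly flag the one point of care (validity of the simplified four-term form on $\mathcal{D}_{\rm P}$), so there is nothing to add.
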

	
	To prove Proposition \ref{prop:phase_min_in_comp}, we show that $\mathcal{K}_{\rm P}$ is compact. Since $\mathcal{K}_{\rm P} \subset \mathcal{K}_{\rm S}$ and $\mathcal{K}_{\rm S}$ is compact, it only remains to show that $\mathcal{K}_{\rm P}$ is closed.  Lemma \ref{lemma:T_closed} already shows that two of the conditions in \eqref{eq:K_large_intersection} correspond to closed sets. For the remaining two conditions, we   need the following auxiliary lemmas. 
	\begin{lemma}\label{lemma:CS_f}
		Let $M > 0$ and $f \in \operatorname{Dom}(T_x)$ be such that $\big\Vert T_x f \big\Vert_\mathcal{S}^2 \leq M$ and $f(0) = 0$. Then, 
		$$
		|f(\omega)| \leq \sqrt{M}\sqrt{\omega}.
		$$
	\end{lemma}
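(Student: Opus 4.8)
The plan is to reduce this to the fundamental theorem of calculus together with the Cauchy--Schwarz inequality, essentially reprising the estimate already carried out in the proof of Proposition \ref{prop:dom_S_admissible}. The first step is to translate the hypothesis $\Vert T_x f\Vert_\mathcal{S}^2 \leq M$ into a statement about the derivative of $f$. By Proposition \ref{prop:frequency_observables}, for $f \in \operatorname{Dom}(T_x)$ we have $T_x f(\omega) = i f'(\omega)$, and since $\mathcal{S} = L^2(\R^+)$ carries the ordinary Lebesgue measure, this gives
$$
\int_0^\infty |f'(\omega)|^2\, d\omega = \big\Vert T_x f \big\Vert_\mathcal{S}^2 \leq M.
$$
In particular $f' \in L^2(\R^+)$, and the membership $f \in \operatorname{Dom}(T_x)$ guarantees that $f$ is absolutely continuous on every compact subinterval of $[0,\infty)$.

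Next I would use absolute continuity together with the boundary value $f(0)=0$ to write $f$ as an integral of its derivative, namely
$$
f(\omega) = f(0) + \int_0^\omega f'(\xi)\, d\xi = \int_0^\omega f'(\xi)\, d\xi
$$
for every $\omega \geq 0$. Applying the Cauchy--Schwarz inequality to the pair $f'$ and the constant function $1$ on the interval $[0,\omega]$ then yields
$$
|f(\omega)| \leq \int_0^\omega |f'(\xi)|\, d\xi \leq \left( \int_0^\omega |f'(\xi)|^2\, d\xi \right)^{1/2}\!\! \left( \int_0^\omega 1\, d\xi \right)^{1/2} \leq \sqrt{M}\,\sqrt{\omega},
$$
where the final inequality uses the bound $\int_0^\omega |f'|^2 \leq \int_0^\infty |f'|^2 \leq M$ established in the first step. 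This gives exactly the claimed pointwise estimate.

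I do not expect any genuine obstacle here: every ingredient (the identification $T_x = i\,\partial_\omega$, the vanishing of $f$ at the origin noted after Proposition \ref{prop:frequency_observables}, and absolute continuity on compact intervals) is already available, and the argument is a verbatim sharpening of the computation in Proposition \ref{prop:dom_S_admissible}, now tracking the explicit constant $\sqrt{M}$ rather than merely concluding admissibility. The only point worth stating carefully is that $f(0)=0$ is legitimate: this follows because $f$ is continuous and vanishes for $\omega<0$, as remarked immediately after Proposition \ref{prop:frequency_observables}.
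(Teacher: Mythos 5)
Your proof is correct and takes essentially the same route as the paper: the paper's proof is a one-line appeal to the Cauchy--Schwarz inequality, and you have simply written out that argument in full (fundamental theorem of calculus from $f(0)=0$, then Cauchy--Schwarz on $[0,\omega]$), mirroring the computation in Proposition \ref{prop:dom_S_admissible} while tracking the explicit constant $\sqrt{M}$.
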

	\begin{proof}
	    This follows from an application of the   Cauchy-Schwarz inequality. 
	\end{proof}
	\begin{lemma}\label{lemma:middle_K_estimate}
		Let $M > 0$ and $f \in L^2(\R^+)$ be such that $\Vert T_x f \Vert_\mathcal{S}^2 \leq M$ and $\frac{1}{\Vert f \Vert_{\mathcal{W}}^2} \big\Vert \omega f' \big\Vert_{\mathcal{W}}^2\left\Vert \frac{f}{\omega} \right\Vert_\mathcal{S}^2 \leq M$. Then, 
		$$
		\big\Vert \omega f' \big\Vert_{\mathcal{W}}^2 \leq 2 e^2 M(M+1).
		$$
	\end{lemma}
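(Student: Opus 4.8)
The plan is to isolate the factor $\Vert \omega f'\Vert_{\mathcal{W}}^2$ from the product appearing in the second hypothesis by estimating the remaining two factors. Writing $N = \Vert f\Vert_{\mathcal{W}}^2$ and $R = \Vert f/\omega\Vert_{\mathcal{S}}^2$, the second hypothesis rearranges (for $f \not\equiv 0$) to $\Vert \omega f'\Vert_{\mathcal{W}}^2 \le M\,N/R$, so the task reduces entirely to bounding the ratio $N/R$ from above by a constant depending only on $M$.

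To control $N/R$, I would bring in the two constraints that hold throughout the ambient set $\mathcal{D}_{\rm P}$ but are not recorded in the displayed inequalities: the normalization $\Vert f\Vert_{\mathcal{S}} = 1$ and the scale-centering $e_f(T_\sigma) = 0$, the latter reading $\int_0^\infty \ln(\omega)\,|f(\omega)|^2\,d\omega = 0$ by Proposition \ref{prop:frequency_observables}. Viewing $d\mu = |f|^2\,d\omega$ as a probability measure, two elementary estimates then close the argument: (i) Cauchy–Schwarz in $\mathcal{S}$ yields $N = \int \omega^{-1}|f|^2\,d\omega \le \Vert f/\omega\Vert_{\mathcal{S}}\,\Vert f\Vert_{\mathcal{S}} = \sqrt{R}$; and (ii) Jensen's inequality for the convex map $x \mapsto e^{-2x}$ applied with $x = \ln\omega$ gives $R = \int e^{-2\ln\omega}\,d\mu \ge e^{-2\int \ln\omega\,d\mu} = 1$. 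Together these give $N/R \le R^{-1/2} \le 1$, hence $\Vert \omega f'\Vert_{\mathcal{W}}^2 \le M$, which lies well inside the claimed bound $2e^2 M(M+1)$.

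The step I expect to be the main obstacle is exactly this lower bound on $R$ (equivalently on $N$), because the two displayed hypotheses \emph{alone} do not control $\Vert \omega f'\Vert_{\mathcal{W}}^2$: a unit-mass bump concentrated near a high frequency $\omega \approx L$ satisfies both $\Vert T_x f\Vert_{\mathcal{S}}^2 \le M$ and the product bound, yet has $\Vert \omega f'\Vert_{\mathcal{W}}^2 \approx LM \to \infty$ as $L \to \infty$. What excludes this is precisely the scale-centering $e_f(T_\sigma) = 0$, which prevents the $|f|^2$-mass from drifting to high frequency and thereby supplies the Jensen bound $R \ge 1$. An alternative, more quantitative route to such a lower bound — and plausibly the source of the explicit constant $2e^2(M+1)$ — would replace step (ii) by feeding the pointwise estimate $|f(\omega)|^2 \le M\omega$ of Lemma \ref{lemma:CS_f} into a split of $N$ and $R$ at a frequency threshold, using the scale information to bound the high-frequency tail of the mass.
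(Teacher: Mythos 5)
Your proposal is correct, but it takes a genuinely different --- and in fact sharper --- route than the paper's. The paper isolates $\Vert \omega f'\Vert_\mathcal{W}^2$ from the product exactly as you do, but then bounds the two remaining factors separately and more crudely: it gets $\Vert f\Vert_\mathcal{W}^2 \leq M+1$ by feeding the pointwise estimate $|f(\omega)|^2 \leq M\omega$ of Lemma \ref{lemma:CS_f} into $\int_0^1 \frac{|f(\omega)|^2}{\omega}\,d\omega$ (this is precisely the ``alternative, more quantitative route'' you sketched at the end), and it gets the uniform lower bound $\left\Vert \frac{f}{\omega}\right\Vert_\mathcal{S}^2 \geq \frac{1}{2e^2}$ by a two-case split at $\omega = e$: if at least half the unit mass lies beyond $e$, the centering identity $\int_0^1 |\ln\omega|\,|f|^2\,d\omega = \int_1^\infty |\ln\omega|\,|f|^2\,d\omega$ together with $1/\omega^2 > |\ln\omega|$ on $(0,1)$ gives $\left\Vert \frac{f}{\omega}\right\Vert_\mathcal{S}^2 \geq \frac{1}{2}$, and otherwise $\int_0^e \frac{|f|^2}{\omega^2}\,d\omega \geq \frac{1}{2e^2}$; multiplying the two bounds produces the constant $2e^2M(M+1)$. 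Your Cauchy--Schwarz estimate $\Vert f\Vert_\mathcal{W}^2 \leq \left\Vert \frac{f}{\omega}\right\Vert_\mathcal{S}$ combined with the Jensen bound $\left\Vert \frac{f}{\omega}\right\Vert_\mathcal{S}^2 \geq 1$ replaces both steps at once and yields the strictly stronger conclusion $\Vert \omega f'\Vert_\mathcal{W}^2 \leq M$, and notably does not use the hypothesis $\Vert T_x f\Vert_\mathcal{S}^2 \leq M$ at all. You are also right on the key diagnostic point: the paper's own proof, like yours, silently invokes $\Vert f\Vert_\mathcal{S} = 1$ and $e_f^\mathcal{S}(T_\sigma) = 0$ (and, through Lemma \ref{lemma:CS_f}, $f(0)=0$), none of which appear in the lemma's displayed hypotheses; the lemma is only ever applied to $f \in \mathcal{K}_{\rm P}$, where these conditions hold, and your high-frequency bump example correctly demonstrates that the displayed hypotheses alone are insufficient. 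Since the downstream use of this lemma (the bound $A(f_n) \leq 2e^2 M(M+1)$ inside the proof of Lemma \ref{lemma:WS_triple_leqK}) only requires \emph{some} finite uniform bound, your cleaner constant could be substituted there without any further change.
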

	\begin{proof}
	    Note first that $\Vert \omega f' \Vert_{\mathcal{W}}^2$ is the second factor of $\frac{1}{\Vert f \Vert_{\mathcal{W}}^2} \big\Vert \omega f' \big\Vert_{\mathcal{W}}^2\left\Vert \frac{f}{\omega} \right\Vert^2 \leq M$. Therefore, by bounding $\frac{1}{\Vert f \Vert_{\mathcal{W}}^2}$ and $\left\Vert \frac{f}{\omega} \right\Vert^2$ from below, we obtain a bound of $\Vert \omega f' \Vert_{\mathcal{W}}^2$ from above. For $\frac{1}{\Vert f \Vert_\mathcal{W}^2}$, we bound
		$$
		\Vert f \Vert_\mathcal{W}^2 = \int_0^\infty \frac{|f(\omega)|^2}{\omega} d\omega \leq \int_0^1 \frac{|f(\omega)|^2}{\omega} d\omega + \underbrace{\Vert f \Vert^2}_{=1} \leq \int_0^1\frac{M \omega}{\omega} d\omega + 1 = M+1,
		$$
		where we made use of Lemma \ref{lemma:CS_f}. Hence,
		\begin{align*}
			\frac{1}{\Vert f \Vert_{\mathcal{W}}^2} \big\Vert \omega f' \big\Vert_{\mathcal{W}}^2\left\Vert \frac{f}{\omega} \right\Vert_\mathcal{S}^2 \leq M \implies \big\Vert \omega f' \big\Vert_{\mathcal{W}}^2\left\Vert \frac{f}{\omega} \right\Vert_\mathcal{S}^2 \leq \frac{M}{\frac{1}{\Vert f \Vert_{\mathcal{W}}^2}} \leq M(M+1).
		\end{align*}
		
		To bound $\left\Vert \frac{f}{\omega} \right\Vert_\mathcal{S}^2$ from below, we consider the following two cases separately.\\
		\noindent
		\textbf{Case 1: }$\int_e^\infty |f(\omega)|^2 d\omega \geq 1/2$.\\
		By the fact that that $1/\omega^2 > |\ln(\omega)|$ in $(0,1)$ and by the fact that
		$$
		e_f^\mathcal{S}(T_\sigma) = 0 \implies \int_0^1 |\ln(\omega)| |f(\omega)|^2 d\omega = \int_1^\infty |\ln(\omega)| |f(\omega)|^2 d\omega,
		$$
		we have
		\begin{align*}
			\left\Vert \frac{f}{\omega} \right\Vert_\mathcal{S}^2 &= \int_0^\infty \frac{|f(\omega)|^2}{\omega^2} d\omega \geq \int_0^1 \frac{|f(\omega)|^2}{\omega^2} d\omega \geq \int_0^1 |\ln(\omega)| |f(\omega)|^2 d\omega\\[2mm]
			&= \int_1^\infty |\ln(\omega)| |f(\omega)|^2 d\omega \geq \int_e^\infty \underbrace{|\ln(\omega)|}_{\geq 1} |f(\omega)|^2 d\omega \geq \int_e^\infty |f(\omega)|^2 d\omega \geq 1/2.
		\end{align*}
		\noindent
		\textbf{Case 2: }$\int_0^e |f(\omega)|^2 d\omega \geq 1/2$.\\
		In this case, we use the estimate
		$$
		\left\Vert \frac{f}{\omega} \right\Vert_\mathcal{S}^2 = \int_0^\infty \frac{|f(\omega)|^2}{\omega^2} d\omega \geq \int_0^e \frac{|f(\omega)|^2}{\omega^2} d\omega \geq \int_0^e \frac{|f(\omega)|^2}{e^2} d\omega \geq \frac{1}{2e^2}.
		$$
		Hence,  $\left\Vert \frac{f}{\omega} \right\Vert_\mathcal{S}^2 \geq \frac{1}{2e^2}$ uniformly. As a result,
		$$
		\big\Vert \omega f' \big\Vert_\mathcal{W}^2 \leq \frac{M (M+1)}{\left\Vert \frac{f}{\omega} \right\Vert_\mathcal{S}^2} \leq 2 e^2 M(M+1),
		$$
		which completes the proof.
	\end{proof}
	\begin{lemma}\label{lemma:W_cont}
		Let $M > 0$ and $(f_n)_n$ be a sequence in $L^2(\R^+)$ with $\Vert T_x f_n \Vert_\mathcal{S}^2 \leq M$ for each $n$ such that $\Vert f_n - f \Vert_\mathcal{S} \to 0$ for some $f \in L^2(\R^+)$. Then $\Vert f_n - f\Vert_\mathcal{W} \to 0$.
	\end{lemma}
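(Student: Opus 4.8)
The plan is to exploit the fact that the window norm $\Vert\cdot\Vert_\mathcal{W}$ and the signal norm $\Vert\cdot\Vert_\mathcal{S}$ differ only by the weight $1/\omega$, which is problematic solely near $\omega=0$. Away from the origin the two norms are comparable, so there the window convergence follows at once from the signal convergence; near the origin I would use the derivative control $\Vert T_x f_n\Vert_\mathcal{S}^2\le M$ to bound each $f_n$ pointwise and thereby tame the weight \emph{uniformly} in $n$.

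Concretely, I would first observe that the hypothesis $\Vert T_x f_n\Vert_\mathcal{S}^2\le M$ forces $f_n\in\operatorname{Dom}(T_x)$, hence $f_n(0)=0$, so Lemma \ref{lemma:CS_f} applies and gives the uniform pointwise bound $|f_n(\omega)|\le\sqrt{M}\sqrt{\omega}$, equivalently $|f_n(\omega)|^2/\omega\le M$. Since $f_n\to f$ in $L^2(\R^+)$, a subsequence converges to $f$ almost everywhere, and passing to the limit in the pointwise bound along this subsequence yields $|f(\omega)|\le\sqrt{M}\sqrt{\omega}$ a.e. As this is a property of the fixed limit $f$ alone, it holds independently of the chosen subsequence, so $|f(\omega)|^2/\omega\le M$ a.e. as well.

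Next, for $\delta>0$ I would split
\[
\Vert f_n-f\Vert_\mathcal{W}^2=\int_0^\delta\frac{|f_n(\omega)-f(\omega)|^2}{\omega}\,d\omega+\int_\delta^\infty\frac{|f_n(\omega)-f(\omega)|^2}{\omega}\,d\omega.
\]
On the tail $[\delta,\infty)$ the weight satisfies $1/\omega\le 1/\delta$, so this piece is bounded by $\tfrac{1}{\delta}\Vert f_n-f\Vert_\mathcal{S}^2$, which tends to $0$ for each fixed $\delta$. On $[0,\delta]$ I would combine the elementary inequality $|f_n-f|^2\le 2|f_n|^2+2|f|^2$ with the two pointwise bounds to get $\tfrac{|f_n-f|^2}{\omega}\le 2\tfrac{|f_n|^2}{\omega}+2\tfrac{|f|^2}{\omega}\le 4M$, so that $\int_0^\delta\tfrac{|f_n-f|^2}{\omega}\,d\omega\le 4M\delta$ uniformly in $n$.

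Finally, given $\varepsilon>0$, I would first fix $\delta$ small enough that $4M\delta<\varepsilon/2$, and then choose $N$ so that $\tfrac{1}{\delta}\Vert f_n-f\Vert_\mathcal{S}^2<\varepsilon/2$ for $n\ge N$, which yields $\Vert f_n-f\Vert_\mathcal{W}^2<\varepsilon$ and hence $\Vert f_n-f\Vert_\mathcal{W}\to 0$. The only delicate point is transferring the pointwise bound from the $f_n$ to the limit $f$, which I handle by the standard passage to an almost-everywhere convergent subsequence; the remainder is a routine two-region estimate.
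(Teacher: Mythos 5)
Your proof is correct and takes essentially the same route as the paper: a two-region split of $\Vert f_n - f \Vert_\mathcal{W}^2$, with Lemma \ref{lemma:CS_f} supplying the uniform bound $|f_n(\omega)|^2/\omega \leq M$ near the origin (hence the integrand bound $4M$ there) and the weight bound $1/\omega \leq 1/\delta$ reducing the tail to $\Vert f_n - f\Vert_\mathcal{S} \to 0$. If anything, your explicit transfer of the pointwise bound to the limit $f$ via an a.e.\ convergent subsequence carefully justifies a step the paper's proof uses only implicitly when it bounds the first integral by $4M \cdot \varepsilon/M$.
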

	\begin{proof}
		Fix $\varepsilon > 0$ and choose $n$ so large that $\Vert f_n - f \Vert_\mathcal{S}^2 < \varepsilon^2/M$. Then
		\begin{align*}
			\Vert f_n - f \Vert_{\mathcal{W}}^2 &= \int_{0}^{\varepsilon / M} \frac{|f_n(\omega) - f(\omega)|^2}{\omega} d\omega + \int_{\varepsilon / M}^{\infty} \frac{|f_n(\omega) - f(\omega)|^2}{\omega} d\omega\\
			&< 4M \frac{\varepsilon}{M} + \frac{M}{\varepsilon} \Vert f_n - f \Vert_\mathcal{S}^2 < 5 \varepsilon,
		\end{align*}
		where we used Lemma \ref{lemma:CS_f} for the first term.
	\end{proof}
	We are now ready to prove two lemmas corresponding to the remaining two conditions of \eqref{eq:K_large_intersection}.
		\begin{lemma}\label{lemma:WS_triple_leqK}
		For any $M > 0$, the set
		$$
		\Bigg\{ f \in L^2(\R^+) : \frac{1}{\Vert f \Vert_{\mathcal{W}}^2} \big\Vert i\omega f' \big\Vert_{\mathcal{W}}^2\left\Vert \frac{f}{\omega} \right\Vert_\mathcal{S}^2 \leq M,\, \Vert T_x f\Vert_\mathcal{S}^2 \leq M \Bigg\}
		$$
		is closed in $L^2(\R^+)$.
	\end{lemma}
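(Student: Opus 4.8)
The plan is to take an arbitrary sequence $(f_n)_n$ in the set with $f_n \to f$ in $\mathcal{S} = L^2(\R^+)$ and show that the limit $f$ again satisfies both defining inequalities, i.e. that the set is sequentially closed. The condition $\Vert T_x f\Vert_\mathcal{S}^2 \leq M$ is handled exactly as in Lemma \ref{lemma:T_closed}: transported to the time domain it reads $\int t^2|\check f(t)|^2\,dt \leq M$, so by Lemma \ref{lemma:xf2leqM_closed} it passes to the limit, and in particular $f \in \operatorname{Dom}(T_x)$ with $f(0)=0$. The real work is to show that the product functional
$$
P(f) = \frac{1}{\Vert f \Vert_{\mathcal{W}}^2}\,\big\Vert \omega f' \big\Vert_{\mathcal{W}}^2\left\Vert \frac{f}{\omega}\right\Vert_\mathcal{S}^2
$$
is lower semicontinuous along the sequence, so that $P(f) \leq \liminf_n P(f_n) \leq M$.

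I would treat the three factors separately. For the prefactor, Lemma \ref{lemma:W_cont} (applicable since $\Vert T_x f_n\Vert_\mathcal{S}^2 \leq M$) upgrades $\mathcal{S}$-convergence to $\mathcal{W}$-convergence, whence $\Vert f_n\Vert_{\mathcal{W}} \to \Vert f\Vert_{\mathcal{W}}$ and $\tfrac{1}{\Vert f_n\Vert_{\mathcal{W}}^2} \to \tfrac{1}{\Vert f\Vert_{\mathcal{W}}^2}$; here $\Vert f\Vert_{\mathcal{W}}>0$, since the degenerate limit $f=0$ is ruled out once this set is intersected with the constraint $\Vert f\Vert_\mathcal{S}=1$ in the definition of $\mathcal{K}_{\rm P}$. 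For the factor $\Vert f/\omega\Vert_\mathcal{S}^2 = \int_0^\infty |f(\omega)|^2/\omega^2\,d\omega$, passing to a subsequence along which $f_n \to f$ pointwise a.e. and invoking Fatou's lemma gives $\Vert f/\omega\Vert_\mathcal{S}^2 \leq \liminf_n \Vert f_n/\omega\Vert_\mathcal{S}^2$.

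The delicate factor is $\big\Vert \omega f'\big\Vert_{\mathcal{W}}^2 = \int_0^\infty \omega|f'(\omega)|^2\,d\omega = \big\Vert \sqrt{\omega}\,f'\big\Vert_\mathcal{S}^2$, since $\mathcal{S}$-convergence of $f_n$ gives no pointwise control on the derivatives. Here I would invoke Lemma \ref{lemma:middle_K_estimate}, which converts the two hypotheses into the uniform bound $\big\Vert \omega f_n'\big\Vert_{\mathcal{W}}^2 \leq 2e^2M(M+1)$; thus $(\sqrt{\omega}\,f_n')_n$ is bounded in $L^2(\R^+)$ and, after passing to a subsequence, converges weakly to some $h$. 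Testing against $\sqrt{\omega}\,\varphi$ for $\varphi \in C_c^\infty((0,\infty))$ and using the distributional identity $\int f_n'\varphi = -\int f_n \varphi'$ together with $f_n \to f$ in $L^2$ identifies $h = \sqrt{\omega}\,f'$; weak lower semicontinuity of the $L^2$-norm then yields $\big\Vert \sqrt{\omega}\,f'\big\Vert_\mathcal{S}^2 \leq \liminf_n \big\Vert \sqrt{\omega}\,f_n'\big\Vert_\mathcal{S}^2$.

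Finally I would assemble the estimates. Writing $a_n = \Vert f_n\Vert_{\mathcal{W}}^{-2}$, $b_n = \big\Vert \omega f_n'\big\Vert_{\mathcal{W}}^2$, $c_n = \Vert f_n/\omega\Vert_\mathcal{S}^2$ and likewise $a,b,c$ for $f$, the hypotheses read $a_n b_n c_n \leq M$. From $b \leq \liminf_n b_n$, $c \leq \liminf_n c_n$ and $a_n \to a$ one has $b_n c_n \leq M/a_n \to M/a$, so, using $\liminf_n b_n \cdot \liminf_n c_n \leq \liminf_n(b_n c_n) \leq \limsup_n(b_n c_n) \leq M/a$ and nonnegativity of all quantities,
$$
abc \;\leq\; a\,\liminf_n b_n\,\liminf_n c_n \;\leq\; a\,\limsup_n(b_n c_n)\;\leq\; a\cdot\frac{M}{a} \;=\; M,
$$
which is exactly $P(f) \leq M$. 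The main obstacle is precisely the derivative term: the whole argument hinges on extracting a weak $L^2$-limit of $\sqrt{\omega}\,f_n'$ and recognizing it as $\sqrt{\omega}\,f'$, which is where the uniform bound from Lemma \ref{lemma:middle_K_estimate} is indispensable.
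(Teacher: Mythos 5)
Your proof is correct, but it takes a genuinely different route from the paper's. Both arguments share the same skeleton at the endpoints: the $T_x$-condition is dispatched via Lemma~\ref{lemma:xf2leqM_closed}, the uniform bound $\Vert \omega f_n'\Vert_{\mathcal{W}}^2 \leq 2e^2M(M+1)$ comes from Lemma~\ref{lemma:middle_K_estimate}, and the upgrade of $\mathcal{S}$-convergence to $\mathcal{W}$-convergence comes from Lemma~\ref{lemma:W_cont}. Where you diverge is the core lower-semicontinuity step. The paper avoids weak compactness entirely: it rewrites the condition as $A(f)B(f) \leq M\Vert f\Vert_{\mathcal{W}}^2$, passes to the logarithmic variable $f^\flat(s) = f(e^s)$ so that $A(f) = 4\pi^2\int \xi^2|\widehat{f^\flat}(\xi)|^2\,d\xi$ with $\Vert f^\flat\Vert = \Vert f\Vert_{\mathcal{W}}$, truncates both factors to $[-N,N]$ and $[1/N,N]$ (where the multipliers $\xi^2$ and $\omega^{-2}$ are bounded by $N^2$), expands $f = f_n + \delta_n$, kills the cross terms using $\Vert\delta_n\Vert_{\mathcal{S}}\to 0$, $\Vert\delta_n\Vert_{\mathcal{W}}\to 0$ and the uniform bound on $A(f_n)$, and finally sends $n\to\infty$ and then $N\to\infty$. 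You instead run the standard direct-method template: Fatou along an a.e.-convergent subsequence for $B$, and Banach--Alaoglu plus distributional identification of the weak limit (which is legitimate, since the $T_x$-bound already makes the limit $f$ locally absolutely continuous, so integration by parts against $\omega\varphi$, $\varphi \in C_c^\infty((0,\infty))$, identifies $h = \sqrt{\omega}\,f'$) for $A$, assembled via supermultiplicativity of $\liminf$ for nonnegative sequences. Your route is shorter and more conceptual; the paper's buys elementarity, staying entirely within norm estimates and avoiding weak convergence machinery. Three points to tighten in your write-up: (i) extract a single common subsequence realizing the a.e.\ convergence and the weak convergence simultaneously (harmless, since the conclusion concerns only the fixed limit $f$, so arguing along a subsequence of an arbitrary subsequence suffices); (ii) your handling of the degenerate case $\Vert f\Vert_{\mathcal{W}} = 0$ by appeal to the intersection with the unit sphere proves closedness of $\mathcal{K}_{\rm P}$ rather than of the set as stated; the cleaner fix, which the paper uses implicitly, is to phrase membership as $A(f)B(f) \leq M\Vert f\Vert_{\mathcal{W}}^2$, which is well-defined at $f = 0$ and in that case follows trivially from $b_n c_n \leq M/a_n \to 0$; (iii) your citation of Lemma~\ref{lemma:middle_K_estimate} inherits the same caveat as the paper's own proof does, namely that the proof of that lemma additionally uses $\Vert f\Vert_{\mathcal{S}} = 1$ and $e_f(T_\sigma) = 0$, which hold in the intended application to $\mathcal{K}_{\rm P}$ but are not hypotheses of the set in Lemma~\ref{lemma:WS_triple_leqK} as stated --- so on this point you and the paper stand on equal footing.
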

	\begin{proof}
		By writing
		$$
		A(f) = \big\Vert i\omega f'\big\Vert_\mathcal{W}^2,\qquad B(f) = \left\Vert \frac{f}{\omega}\right\Vert_\mathcal{S}^2,
		$$
		we have that
		$$
		\frac{1}{\Vert f \Vert_{\mathcal{W}}^2} \big\Vert i\omega f' \big\Vert_{\mathcal{W}}^2\left\Vert \frac{f}{\omega} \right\Vert_\mathcal{S}^2 \leq M \iff A(f)B(f) \leq M \Vert f \Vert_\mathcal{W}^2.
		$$
		Using the change of variables $\omega = e^s$, we obtain
		\begin{align*}
			A(f) = \int_0^\infty \omega \big|f'(\omega)\big|^2 d\omega = \int_{-\infty}^\infty e^s \big|f'(e^s)\big|^2 e^s ds = \int_\R \big|e^s f'(e^s)\big|^2 ds = \int_\R \big|f^{\flat\prime}(s)\big|^2 ds
		\end{align*}
		where $f^\flat(s) = f(e^s)$. It can be verified that $\Vert f^\flat \Vert_\mathcal{S} = \Vert f \Vert_{\mathcal{W}}$, which allows us to   apply the Fourier transform to $f^\flat$ to get 
		$$
		A(f) = 4 \pi^2 \int_\R t^2 \big|\widehat{f^\flat}(\xi)\big|^2 d\xi.
		$$
		
		Next, let $(f_n)$ be a sequence such that $\Vert f_n - f \Vert_\mathcal{S} \to 0$ for some $f \in L^2(\R^+)$ and write $\delta_n = f-f_n$. Then, it holds that 
		\begin{align*}
			\int_{\R} |\widehat{\delta_n^\flat}(t)|^2 dt &= \int_{\R} |\delta_n^\flat(s)|^2 ds = \int_\R |\delta_n(e^s)|^2 ds\\
			&= \int_0^\infty |\delta_n(\omega)|^2 \frac{1}{\omega} d\omega = \Vert f_n - f \Vert_\mathcal{W}^2 \xrightarrow[n \to \infty]{} 0,
		\end{align*}
		where we used the Plancherel theorem for the first step, and Lemma \ref{lemma:W_cont} for the last step. 
		
		We now estimate
		\begin{align*}
			&\left(4 \pi^2\int_{-N}^{N} \xi^2 |\widehat{f^\flat}(\xi)|^2 d\xi \right)\left(\int_{1/N}^{N}\frac{1}{\omega^2}|f(\omega)|^2 \,d\omega\right)\\
			&\leq \vast( \underbrace{4 \pi^2 \int_{-N}^{N} \xi^2  \big|\widehat{f_n^\flat}(\xi)\big|^2 \,d\omega}_{\leq A(f_n)} + 2\cdot 4 \pi^2\sqrt{\underbrace{\int_{-N}^{N} \xi^2 \big|\widehat{f_n^\flat}(\xi)\big|^2 \,d\xi}_{\leq A(f_n) \leq M(M+1) 2 e^2}} \sqrt{N^2\int_{-N}^{N} \big|\widehat{\delta_n^\flat}(\xi)\big|^2 \,d\xi}\\
			&\,\,\,\,+ 4 \pi^2 N^2 \int_{-N}^{N} \big|\widehat{\delta_n^\flat}(\xi)\big|^2 \,d\xi \vast)  \vast( \underbrace{\int_{1/N}^{N} \frac{1}{\omega^2} |f_n(\omega)|^2 \,d\omega}_{\leq B(f_n)}\\
			&\,\,\,\,+ 2\sqrt{N^2 \underbrace{\int_{1/N}^{N} |f_n(\omega)|^2 \,d\omega}_{\leq \Vert f_n \Vert^2 \leq 1}} \sqrt{N^2 \int_{1/N}^{N} |\delta_n(\omega)|^2 \,d\omega} + N^2\int_{1/N}^{N} |\delta_n(\omega)|^2 \,d\omega \vast)\\
			&= A(f_n)B(f_n) + o_n(1) \leq M \Vert f_n \Vert_{\mathcal{W}}^2 + o_n(1)
		\end{align*}
		where we in the last step used the fact that $\Vert \delta_n \Vert_\mathcal{S} \to 0$ and $\big\Vert \widehat{\delta_n^\flat} \big\Vert_\mathcal{S} \to 0$ as $n \to \infty$. 
		
		Finally, since $\Vert f_n \Vert_\mathcal{W} \to \Vert f \Vert_{\mathcal{W}}$, by Lemma \ref{lemma:W_cont} and by letting $n \to \infty$, we conclude, for all $N$, that 
		\begin{align}\label{eq:AB_finite_estimate}
		    \left(\int_{-N}^{N} t^2 \big|\widehat{f^\flat}(\xi)\big|^2 d\xi \right)\left(\int_{1/N}^{N}\frac{1}{\omega^2}|f(\omega)|^2 \,d\omega\right) \leq M \Vert f \Vert_{\mathcal{W}}^2.
		\end{align}
		 The left-hand side of \eqref{eq:AB_finite_estimate} converges to $A(f)B(f)$ as $N \to \infty$, which yields
		$$
		A(f)B(f) = \big\Vert i\omega f' \big\Vert_{\mathcal{W}}^2\left\Vert \frac{f}{\omega} \right\Vert_\mathcal{S}^2 \leq M \Vert f \Vert_{\mathcal{W}}^2.
		$$
		This concludes the proof.
	\end{proof}
	
	\begin{lemma}\label{lemma:WS_var_scale_leq_K}
		For any $M > 0$, the set
		$$
		\Bigg\{f \in L^2(\R^+) : \frac{1}{\Vert f \Vert_{\mathcal{W}}^2} v_f^\mathcal{W}(-\ln(\omega)) \leq M,\, \big\Vert T_x f \big\Vert_\mathcal{S}^2 \leq M \Bigg\}
		$$
		is closed in $L^2(\R^+)$.
	\end{lemma}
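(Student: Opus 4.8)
The plan is to prove sequential closedness directly: take a sequence $(f_n)_n$ in the set with $f_n \to f$ in $\mathcal{S} = L^2(\R^+)$, and check that both constraints survive in the limit. The bound $\Vert T_x f\Vert_\mathcal{S}^2 \le M$ is the easy half: in the time domain it reads $\int t^2 |\check f(t)|^2\,dt \le M$, which is closed under $L^2$-convergence by Lemma \ref{lemma:xf2leqM_closed} (transferred via the Fourier transform, under which $\mathcal{S}$-convergence becomes $L^2(\R)$-convergence of $\check f_n$). The real content is to pass the variance constraint to the limit. A first useful observation is that, since $\Vert T_x f_n\Vert_\mathcal{S}^2 \le M$ for all $n$, Lemma \ref{lemma:W_cont} upgrades the $\mathcal{S}$-convergence to convergence in the window space, $\Vert f_n - f\Vert_\mathcal{W} \to 0$.

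Next I would move the variance condition to $L^2(\R)$ through the isometry $f \mapsto f^\flat$, $f^\flat(s) = f(e^s)$, already used in Lemma \ref{lemma:WS_triple_leqK}, under which $\Vert f^\flat\Vert_\mathcal{S} = \Vert f\Vert_\mathcal{W}$ and the multiplication operator $-\ln(\omega)$ becomes multiplication by $-s$. Writing $q_n = f_n^\flat$ and $q = f^\flat$, the $\mathcal{W}$-convergence becomes $q_n \to q$ in $L^2(\R)$, and, since variance is insensitive to the sign of the multiplier,
$$
\frac{1}{\Vert f_n\Vert_\mathcal{W}^2} v_{f_n}^\mathcal{W}(-\ln(\omega)) = \frac{1}{\Vert q_n\Vert_{L^2}^2}\int_\R (s - \mu_n)^2 |q_n(s)|^2\,ds, \qquad \mu_n = \frac{1}{\Vert q_n\Vert_{L^2}^2}\int_\R s\,|q_n(s)|^2\,ds,
$$
i.e., the normalized variance of the probability density $|q_n|^2/\Vert q_n\Vert_{L^2}^2$. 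The claim thus reduces to the statement that $\{q \in L^2(\R) : \text{the normalized variance of } |q|^2 \text{ is } \le M\}$ is closed under $L^2$-convergence. (Should the limit be the zero function, it is excluded once the normalization $\Vert f\Vert_\mathcal{S}=1$ is imposed in forming $\mathcal{K}_{\rm P}$, so I may assume $q \neq 0$.)

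The key step, and the main obstacle, is to control the means $\mu_n$: variance is translation invariant, so a priori the centers $\mu_n$ could escape to infinity while the spread stays bounded, and then there is no reason for the second moments to converge. I would rule this out by a concentration argument. Suppose $|\mu_{n_k}| \to \infty$ along a subsequence. Chebyshev's inequality applied to the density $|q_{n_k}|^2/\Vert q_{n_k}\Vert_{L^2}^2$ shows that, for every $T>0$, a fraction at least $1 - M/T^2$ of the mass lies in $[\mu_{n_k}-T,\,\mu_{n_k}+T]$; since $\mu_{n_k}$ eventually leaves every bounded interval, this forces $\Vert q_{n_k}\Vert_{L^2[-R,R]} \to 0$ for each fixed $R$, and hence $q = 0$ (because $q_{n_k}\to q$ in $L^2$), contradicting $q\neq 0$. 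Therefore $(\mu_n)$ is bounded.

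With $(\mu_n)$ bounded, I would pass to a subsequence along which $\mu_{n_k} \to c_*$ and $q_{n_k} \to q$ pointwise almost everywhere. Applying Fatou's lemma to $(s-\mu_{n_k})^2 |q_{n_k}(s)|^2 \ge 0$, together with $\Vert q_{n_k}\Vert_{L^2}^2 \to \Vert q\Vert_{L^2}^2$, yields
$$
\int_\R (s-c_*)^2 |q(s)|^2\,ds \le \liminf_{k} \int_\R (s-\mu_{n_k})^2 |q_{n_k}(s)|^2\,ds \le M\,\Vert q\Vert_{L^2}^2.
$$
Since the normalized variance equals $\inf_{c\in\R}\frac{1}{\Vert q\Vert_{L^2}^2}\int_\R (s-c)^2|q|^2\,ds$ and its value at the test point $c_*$ is already $\le M$, the limiting normalized variance is $\le M$, which is exactly the remaining constraint. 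This completes the argument; the only delicate point is the no-escape-of-mass step that bounds the means, while the transfer to $L^2(\R)$, the Fatou estimate, and the closedness of the $T_x$-constraint are routine.
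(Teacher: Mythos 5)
Your argument is correct, and it takes a genuinely different route from the paper's at the key step. The scaffolding is shared: you dispose of $\Vert T_x f\Vert_\mathcal{S}^2\le M$ via the Fourier transform and Lemma \ref{lemma:xf2leqM_closed}, and you invoke Lemma \ref{lemma:W_cont} to upgrade $\mathcal{S}$-convergence to $\mathcal{W}$-convergence, exactly as the paper does. From there the paper stays in the $\omega$-variable: it rewrites the variance bound as the moment inequality \eqref{eq:window_var_ineq_phase}, proves actual convergence of the window-expected scales $e_{f_n}^\mathcal{W}(-\ln(\omega))\to e_f^\mathcal{W}(-\ln(\omega))$ by an explicit $\varepsilon$-estimate (using the pointwise bound $|f(\omega)|\le\sqrt{M\omega}$ of Lemma \ref{lemma:CS_f} to tame the weight $1/\omega$ near the origin), and then passes the truncated second moment $\int_{1/N}^{N}\frac{\ln(\omega)^2}{\omega}|f(\omega)|^2\,d\omega$ to the limit by expanding $f=f_n+\delta_n$ with Cauchy--Schwarz on $[1/N,N]$ before letting $N\to\infty$. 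You instead transfer the constraint to $L^2(\R)$ by the isometry $f\mapsto f^\flat$ and never prove that the means converge --- only that they are bounded, via the Chebyshev no-escape-of-mass argument, which is sound (the uniform bound on $\Vert q_{n_k}\Vert_{L^2}$ needed there comes from the $\mathcal{W}$-convergence); you then close with Fatou and the variational characterization of variance as $\inf_c$ of the second moment about $c$, and the subsequence extractions are harmless since the conclusion concerns the fixed limit $q$. Your route is softer and more general: it works verbatim for any normalized-variance constraint attached to a multiplication operator, and it sidesteps the paper's delicate quantitative estimates (one of which --- bounding $\int\big||f_n|^2-|f|^2\big|\,d\omega$ by $\big|\Vert f_n\Vert_\mathcal{S}^2-\Vert f\Vert_\mathcal{S}^2\big|$ --- is in fact imprecise as written and should be $\Vert f_n-f\Vert_\mathcal{S}\Vert f_n+f\Vert_\mathcal{S}$). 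What the paper's route buys is the inequality form \eqref{eq:window_var_ineq_phase}, which is trivially satisfied by $f=0$, so a zero limit needs no special plea; your argument genuinely requires $q\neq 0$, and this is not entirely cosmetic: since the constraints are invariant under scalings, $g/n\to 0$ for any member $g$, so the lemma is true only under the convention that $0$ satisfies the variance constraint. Rather than deferring to the normalization $\Vert f\Vert_\mathcal{S}=1$ in $\mathcal{K}_{\rm P}$ (which does suffice for the application), the cleaner fix is to state the constraint in the paper's inequality form, under which a zero limit lies in the set trivially and your proof covers the lemma in full.
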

	\begin{proof}
		We begin by writing the variance as
		$$
		v_f^\mathcal{W}(-\ln(\omega)) = e_f^\mathcal{W}(\ln(\omega)^2) - e_f^\mathcal{W}(-\ln(\omega))^2.
		$$
		As a result, $\frac{1}{\Vert f \Vert_{\mathcal{W}}^2} v_f^\mathcal{W}(-\ln(\omega)) \leq M$ can be written as
		\begin{equation}\label{eq:window_var_ineq_phase}
    		\begin{aligned}
    		    \int_{0}^{\infty}\frac{\ln(\omega)^2}{\omega} |f(\omega)|^2 \,d\omega &\leq M \Vert f \Vert_{\mathcal{W}}^2 + \left(\int_{0}^{\infty} \frac{-\ln(\omega)}{\omega}|f(\omega)|^2 \,d\omega\right)^2\\[2mm]
    		    &= M \Vert f \Vert_{\mathcal{W}}^2 + e_f^\mathcal{W}(-\ln(\omega))^2.
    		\end{aligned}
		\end{equation}
		To show that the inequality \eqref{eq:window_var_ineq_phase} corresponds to a closed subset, we let $(f_n)_n$ converge to $f$ in $L^2(\R^+)$, such that each $f_n$ satisfies \eqref{eq:window_var_ineq_phase},   and show that $f$ also satisfies \eqref{eq:window_var_ineq_phase}.  We first show that 
		\begin{align}\label{eq:conv_window_expected_scale}
			e_{f_n}^\mathcal{W}(-\ln(\omega)) \to e_{f}^\mathcal{W}(-\ln(\omega))
		\end{align}
		as $n \to \infty$. To see this, let $\varepsilon > 0$ be so small that $\varepsilon / M < 1/2$, and choose $n$ so large that $\big|\Vert f_n\Vert_\mathcal{S}^2 - \Vert f \Vert_\mathcal{S}^2 \big| < \varepsilon^2$. Then
		\begin{align*}
			\Big|e_{f_n}^\mathcal{W}(-\ln(\omega)) - e_{f}^\mathcal{W}(-\ln(\omega))\Big| &= \left|\int_{0}^{\infty} \frac{-\ln(\omega)}{\omega} \big(|f_n(\omega)|^2 - |f(\omega)|^2\big) d\omega\right|\\
			&\leq \int_{0}^{\varepsilon/M} |\ln(\omega)|2M d\omega + \int_{\varepsilon/M}^{\infty} \underbrace{\frac{|\ln(\omega)|}{\omega}}_{\leq \frac{\ln(\varepsilon/M)}{\varepsilon/M}}\big||f_n(\omega)|^2 - |f(\omega)|^2\big| d\omega\\
			&\leq \frac{\varepsilon}{M}\left(\ln\left(\varepsilon/M\right) - 1\right)2M + M\frac{\ln(\varepsilon/M)}{\varepsilon}\left| \Vert f_n \Vert_\mathcal{S}^2 - \Vert f \Vert_\mathcal{S}^2 \right|\\
			&\leq \varepsilon\left( 3 \ln(\varepsilon/M) - 2 \right).
		\end{align*}
		Next, with $\delta_n = f-f_n$, we have that for each $N$, 
		\begin{align*}
			\int_{1/N}^{N}\frac{\ln(\omega)^2}{\omega} |f(\omega)|^2 \,d\omega &= \int_{1/N}^{N} \frac{\ln(\omega)^2}{\omega} |f_n(\omega)|^2 \,d\omega + 2\operatorname{Re}\int_{1/N}^{N} \frac{\ln(\omega)^2}{\omega} f_n(\omega) \overline{\delta_n(\omega)} \,d\omega \\[2mm]
			&\hspace{10mm}\,\,\,+ \int_{1/N}^{N} \frac{\ln(\omega)^2}{\omega} |\delta_n(\omega)|^2 \,d\omega \\[2mm]
			&\leq M \Vert f_n \Vert_\mathcal{W}^2 + e_{f_n}^\mathcal{W}(-\ln(\omega))^2\\[2mm]
			&\hspace{10mm}\,\,\,+ 2\sqrt{\frac{\ln(1/N)^2}{N}\int_{1/N}^{N} |f_n(\omega)|^2 \,d\omega} \sqrt{\frac{\ln(1/N)^2}{N}\int_{1/N}^{N}|\delta_n(\omega)|^2 \,d\omega}\\[2mm]
			&\hspace{20mm}\,\,\,+ \frac{\ln(1/N)^2}{N}\int_{1/N}^{N}|\delta_n(\omega)|^2 \,d\omega.
		\end{align*}
	    By Lemma \ref{lemma:W_cont} and \eqref{eq:conv_window_expected_scale}, the first two terms of the upper bound, above, converge to $M \Vert f \Vert_\mathcal{W}^2 + e_f^\mathcal{W}(-\ln(\omega))^2$ as $n \to \infty$, while the remaining terms  all vanish as $n \to \infty$ since $\Vert \delta_n \Vert_\mathcal{S} \to 0$. We therefore have that 
		$$
		\int_0^\infty \frac{\ln(\omega)^2}{\omega}|f(\omega)|^2 \,d\omega = \lim_{N \to \infty} \int_{1/N}^N \frac{\ln(\omega)^2}{\omega}|f(\omega)|^2 \,d\omega \leq M \Vert f \Vert_{\mathcal{W}}^2 + e_f^\mathcal{W}(-\ln(\omega))^2,
		$$
		which concludes the proof.
	\end{proof}
	
	\begin{proposition}\label{prop:phase_K_closed}
		The set $\mathcal{K}_{\rm P}$ is closed in $L^2(\R^+)$.
	\end{proposition}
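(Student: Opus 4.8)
The plan is to prove closedness exactly as was done for $\mathcal{K}_{\rm S}$ in Proposition \ref{prop:K_closed}: namely, to exhibit $\mathcal{K}_{\rm P}$ as a finite intersection of closed subsets of $L^2(\R^+)$ and then invoke the fact that a finite intersection of closed sets is closed. The seven defining conditions in \eqref{eq:K_large_intersection} group naturally into five families, and for each family one of the preceding lemmas already supplies the required closed set.

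First I would dispose of the normalization and the two first-order observable conditions. The unit sphere $\{f \in L^2(\R^+) : \Vert f \Vert_\mathcal{S} = 1\}$ is closed because the norm is continuous. The pair $e_f(T_x) = 0$, $\Vert T_x f \Vert_\mathcal{S}^2 \leq K$ is exactly the set $A$ of Lemma \ref{lemma:T_closed} with $M = K$, and the pair $e_f(T_\sigma) = 0$, $\Vert T_\sigma f \Vert_\mathcal{S}^2 \leq K$ is the set $B$ of the same lemma; both are therefore closed in $L^2(\R^+)$.

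Next I would invoke the two harder lemmas for the mixed $\mathcal{S}$--$\mathcal{W}$ conditions. The condition $\frac{1}{\Vert f \Vert_{\mathcal{W}}^2}\Vert i\omega f'\Vert_{\mathcal{W}}^2\Vert f/\omega\Vert_\mathcal{S}^2 \leq K$ taken together with $\Vert T_x f\Vert_\mathcal{S}^2 \leq K$ is precisely the set of Lemma \ref{lemma:WS_triple_leqK} with $M = K$, hence closed; and $\frac{1}{\Vert f \Vert_{\mathcal{W}}^2}v_f^\mathcal{W}(-\ln(\omega)) \leq K$ taken with $\Vert T_x f\Vert_\mathcal{S}^2 \leq K$ is the set of Lemma \ref{lemma:WS_var_scale_leq_K} with $M = K$, hence closed. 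I would deliberately keep the redundant bound $\Vert T_x f\Vert_\mathcal{S}^2 \leq K$ attached to each of these two sets, since it is an essential hypothesis in both lemmas (their closedness proofs use Lemma \ref{lemma:CS_f} to control $f$ near $\omega = 0$, and Lemma \ref{lemma:middle_K_estimate} to keep $\Vert \omega f'\Vert_\mathcal{W}^2$ finite). This costs nothing, as that bound is already one of the conditions defining $\mathcal{K}_{\rm P}$.

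Finally, $\mathcal{K}_{\rm P}$ is exactly the intersection of these five closed sets, so it is closed, which completes the proof. I do not anticipate a genuine obstacle at this stage: all the analytic difficulty—the behaviour near $\omega = 0$, the interplay between the signal norm $\Vert\cdot\Vert_\mathcal{S}$ and the window norm $\Vert\cdot\Vert_\mathcal{W}$, and the convergence of the truncated integrals—has already been absorbed into the auxiliary lemmas, so the remaining task is bookkeeping: verifying that the defining conditions of the five sets together reproduce exactly the conditions in \eqref{eq:K_large_intersection}. The one point I would state explicitly is that each of these sets is closed as a subset of all of $L^2(\R^+)$, so their intersection is closed in $L^2(\R^+)$ rather than merely in some smaller domain.
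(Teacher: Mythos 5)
Your proposal is correct and takes essentially the same route as the paper, which likewise proves the proposition in one line by writing $\mathcal{K}_{\rm P}$ as an intersection of the closed sets appearing in Lemmas \ref{lemma:T_closed}, \ref{lemma:WS_triple_leqK} and \ref{lemma:WS_var_scale_leq_K}. Your added care—keeping the bound $\big\Vert T_x f \big\Vert_\mathcal{S}^2 \leq K$ paired with each of the two mixed $\mathcal{S}$--$\mathcal{W}$ conditions, since it is a genuine hypothesis of those lemmas, and noting explicitly that the unit sphere is closed—merely makes explicit what the paper leaves implicit.
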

	\begin{proof}
		This follows directly from lemmas \ref{lemma:T_closed}, \ref{lemma:WS_triple_leqK} and \ref{lemma:WS_var_scale_leq_K} by taking the intersections of the sets appearing in these lemmas. 
	\end{proof}
	\begin{proof}[Proof of Proposition \ref{prop:phase_min_in_comp}]
	    By Lemma \ref{lemma:phase_tail_compact}, the tail of any minimizing sequence is in $\mathcal{K}_{\rm P}$. Moreover, $\mathcal{K}_{\rm P}$ is compact since, by Lemma \ref{prop:phase_K_closed},  it is a closed subset of $\mathcal{K}_{\rm S}$. 
	\end{proof}
	
	With Proposition \ref{prop:phase_min_in_comp} established, the proof of Theorem \ref{thm:exist_min_Dl} follows by the same procedure used to prove Theorem \ref{thm:exist_min_DS}. The only modifications are adjustments for equation \eqref{eq:phase_uncer_full} defining $\mathcal{L}_{\rm P}(f)$ consisting of four terms and replacing $\mathcal{D}_{\rm S}$ and $\mathcal{K}_{\rm S}$ by $\mathcal{D}_{\rm P}$ and $\mathcal{K}_{\rm P}$.

    \section{Conclusion}
    We have proven non-constructively the existence of minimizers for the signal space uncertainty  and phase space uncertainty functionals $\mathcal{L}_{\rm S}$ and $\mathcal{L}_{\rm P}$.  In \cite{levie2021}, a numerical gradient descent scheme was presented, which estimates a minimizer of the uncertainty functional $\mathcal{L}_{\rm P}$.  However, no approximation results were proven for the numerical scheme. In a future work, we  prove that discrete numerical estimates of wavelet uncertainty minimizers indeed approximate true minimizers in $L^2(\R^+)$, for both $\mathcal{L}_{\rm S}$ and $\mathcal{L}_{\rm P}$.

	\printbibliography
    
\end{document}